
%
%
%


\documentclass{amsart}
\usepackage[margin=1in]{geometry}



\usepackage{color,amssymb, comment, mathrsfs,tikz,upgreek,contour, soul, colonequals, mathdots,fancyvrb,amsmath,bm}
\usepackage[colorlinks, linkcolor={blue}]{hyperref}
\usepackage{tikz-cd}
\usepackage{cleveref}
\usetikzlibrary{cd}
\usetikzlibrary{fit,shapes.geometric}
\usetikzlibrary{matrix}
\usetikzlibrary{arrows}
\usepackage{enumitem}
\usepackage{mathtools}
\usepackage[all]{xy}
\usepackage[mathlines,pagewise]{lineno}
\usepackage{cancel}
\allowdisplaybreaks

\newtheorem{theorem}{Theorem}[section]
\newtheorem{lemma}[theorem]{Lemma}
\newtheorem{proposition}[theorem]{Proposition}
\newtheorem{corollary}[theorem]{Corollary}

\usepackage[only,llbracket,rrbracket,llparenthesis,rrparenthesis]{stmaryrd} 
\usepackage{accsupp}

\theoremstyle{definition}
\newtheorem{definition}[theorem]{Definition}
\newtheorem{construction}[theorem]{Construction}
\newtheorem{example}[theorem]{Example}

\theoremstyle{remark}
\newtheorem{remark}[theorem]{Remark}

\newtheorem{chunk}[theorem]{}

\numberwithin{equation}{section}

\newcommand{\kk}{\Bbbk}

\newcommand{\supp}{\operatorname{supp}}



\makeatletter
\DeclareFontEncoding{LS2}{}{\@noaccents}
\makeatother
\DeclareFontSubstitution{LS2}{stix}{m}{n}

\DeclareSymbolFont{largesymbolsstix}{LS2}{stixex}{m}{n}

\DeclareMathDelimiter{\lbrbrak}{\mathopen}{largesymbolsstix}{"EE}{largesymbolsstix}{"14}
\DeclareMathDelimiter{\rbrbrak}{\mathclose}{largesymbolsstix}{"EF}{largesymbolsstix}{"15}


\crefname{diagram}{diagram}{diagrams}
\crefname{diagram}{Diagram}{Diagrams}
\creflabelformat{diagram}{(#1) #2 #3}

\renewcommand{\b}{\bullet}
\newcommand{\cone}{\operatorname{cone}}

\newcommand{\set}{\operatorname{set}}
\newcommand{\bbT}{\mathbb{T}}
\newcommand{\bbK}{\mathbb{K}}
\newcommand{\tbbT}{\widetilde{\mathbb{T}}}
\renewcommand{\b}{\bullet}
\newcommand{\bbHT}{\mathbb{HT}}

\begin{document}
\title[The Herzog-Takayama resolution over a skew polynomial ring]{The Herzog-Takayama resolution over a skew polynomial ring}
\author[L.~Ferraro]{Luigi Ferraro}

\author[L. Utkina]{Linoy Utkina}

\begin{abstract}
Let $\kk$ be a field, and let $I$ be a monomial ideal in the polynomial ring $R=\kk[x_1,\ldots,x_n]$. In her thesis, Taylor introduced a complex that provides a finite free resolution of $R/I$ as an $R$-module. Building on this, Ferraro, Martin and Moore extended this construction to monomial ideals in skew polynomial rings. Since the Taylor resolution is generally not minimal, significant effort has been devoted to identifying classes of ideals with minimal free resolutions that are relatively straightforward to construct. In a 1987 paper, Eliahou and Kervaire developed a minimal free resolution for a class of monomial ideals in $R$ known as stable ideals. This result was later generalized to stable ideals in skew polynomial rings by Ferraro and Hardesty. In a 2002 paper, Herzog and Takayama constructed a minimal free resolution for monomial ideals with linear quotients, a broader class of ideals containing stable ideals. Their resolution reduces to the Eliahou-Kervaire resolution in the stable case. In this paper, we generalize the Herzog-Takayama resolution to skew polynomial rings.

\end{abstract}

\maketitle

\section{Introduction}
Let $\kk$ be a field, and let $R$ denote a standard graded commutative polynomial ring over $\kk$. In commutative algebra, a fundamental topic of study is the free resolutions of monomial ideals in $R$. Taylor's construction in \cite{TaylorDiana} provides a free resolution for any monomial ideal, though this resolution is typically not minimal. Various minimal free resolutions have been developed for specific types of monomial ideals. Examples include the Eliahou-Kervaire resolution for stable ideals \cite{EK} and the Aramova-Herzog-Hibi resolution for squarefree stable ideals \cite{sqrfreeStable}. Reiner and Welker \cite{ReinWelk}, along with Novik, Postnikov, and Sturmfels \cite{NovPosSturm}, have further constructed minimal free resolutions for matroidal ideals. These classes of monomial ideals fall within the framework introduced by Herzog and Takayama in \cite{cone}; specifically, they are ideals with linear quotients that admit a regular decomposition function, for which Herzog and Takayama provide a minimal free resolution.

Skew polynomial rings, defined as polynomial rings in which variables commute up to multiplication by a nonzero scalar, are of interest in noncommutative algebra and noncommutative algebraic geometry. The construction of free resolutions, a central problem in commutative settings, naturally extends to these noncommutative contexts. The first author, together with Martin and Moore, generalized Taylor's construction to obtain a free resolution for monomial ideals in skew polynomial rings \cite{Taylor}. In a subsequent work, the first author and Hardesty, extended the Eliahou-Kervaire resolution to stable ideals within skew polynomial rings \cite{EKskew}. The objective of this paper is to further generalize the Herzog-Takayama resolution, adapting it to ideals with linear quotients and regular decomposition function in skew polynomial rings. This construction generalizes the one developed by the first author and Hardesty in \cite{EKskew} and provides a minimal free resolution for both squarefree stable ideals and matroidal ideals in the noncommutative case.

The construction of the skew Eliahou-Kervaire resolution in \cite{EKskew} is based on the original work by Eliahou and Kervaire \cite{EK}. Similarly, the skew Herzog-Takayama resolution we present here draws on ideas from \cite{cone}, namely constructing the resolution as an iterated mapping cone. This approach applies to all ideals with linear quotients; however, to explicitly determine a formula for the differential, we further assume that the decomposition function of the ideal is regular, a concept introduced by Herzog and Takayama in \cite{cone}.

The paper is organized as follows. In Section 2, we recall the definitions and properties of two group bicharacters introduced in \cite{Taylor} and \cite{FM}, which will play a key role in our constructions. We then adapt the definition of ideals with linear quotients and regular decomposition function from \cite{cone} to the skew case, and we state preliminary lemmas that mirror their commutative counterparts from \cite{cone}, with proofs omitted due to similarity.

Section 3 begins by defining the skew Taylor resolution twisted by a monomial, which servers as a tool for constructing the skew Herzog-Takayama resolution. We show that this twisted skew Taylor resolution is isomorphic to the untwisted skew Taylor resolution, but we choose the twisted form as it yields a differential in the skew Herzog-Takayama resolution that resembles that of the skew Eliahou-Kervaire resolution from \cite{EKskew}. Additionally, we prove that when the sequence of monomials generating the ideal is regular, the skew Taylor resolution is isomorphic to the skew Koszul complex defined in \cite{FM}. This parallels the approach in \cite{cone}, where the Koszul complex on the variables (which in the commutative case coincides with the Taylor resolution) is employed. We prefer the twisted skew Taylor resolution over the skew Koszul complex here, as it produces a more refined differential for the skew Herzog-Takayama resolution.

Finally, we define the skew Herzog-Takayama resolution and prove that it minimally resolves ideals with linear quotients that admit a regular decomposition function. We conclude the paper with an example and several corollaries concerning Poincaré series, Betti numbers, projective dimension, and regularity.

\section{Background and Preliminaries}

In this paper, we work over a field $\kk$ and consider the skew polynomial ring $R=\kk_\mathfrak{q}[x_1,\ldots,x_n]$, where the multiplication is defined by the relation $x_ix_j=q_{i,j}x_jx_i$ for all $i,j$. Here each $q_{j,i}$ belongs to $\kk^*$ and satisfies $q_{j,i}=q_{i,j}^{-1}$. Additionally, the field $\kk$ is central in $R$.

We will denote by $X$ the set of monomials in $R$. If $\mathbf{a}=(a_1,\ldots,a_n)\in\mathbb{N}^n$, we use the notation $\mathbf{x}^\mathbf{a}$ to represent the monomial $x_1^{a_1}\cdots x_n^{a_n}$. Given $\mathbf{a}=(a_1,\ldots,a_n)$ and $\mathbf{b}=(b_1,\ldots,b_n)\in\mathbb{N}^n$ with $a_i\geq b_i$ for all $i=1,\ldots,n$, we denote by $\frac{\mathbf{x}^\mathbf{a}}{\mathbf{x}^{\mathbf{b}}}$ the monomial $\mathbf{x}^{\mathbf{a}-\mathbf{b}}$. Likewise, the product $\mathbf{x}^{\mathbf{a}}*\mathbf{x}^\mathbf{b}$ represents the monomial $\mathbf{x}^{\mathbf{a}+\mathbf{b}}$. The set $X$, equipped with the operation $*$, forms a monoid.

The ring $R$ has a $\mathbb{Z}$-grading defined by assigning each variable $x_i$ a degree $\mathrm{deg}\;x_i=d_i\in\mathbb{Z}^+$ for all $i=1,\ldots,n$. Let $\sigma_1,\ldots,\sigma_n$ denote the normalizing automorphisms of the variables $x_1,\ldots, x_n$ in $R$. These automorphisms generate an abelian subgroup $G$ of the group of $\mathbb{Z}$-graded ring automorphisms of $R$. Consequently, $R$ admits a $G\times\mathbb{Z}$-grading, where the $G$-degree of a monomial $x_1^{a_1}\cdots x_n^{a_n}$ is given by $\sigma_1^{a_1}\cdots\sigma_n^{a_n}$.

The mapping $(\sigma_i,\sigma_j)\mapsto q_{i,j}$ extends to an alternating bicharacter $\chi:G\times G\rightarrow\kk^*$, as noted in \cite[Remark 2.1]{Taylor}. If two monomials $\mathbf{x}^\mathbf{a}$ and $\mathbf{x}^\mathbf{b}$ have $G$-degree $\sigma \text{ and }\tau$, we use the notation $\chi(\mathbf{x}^\mathbf{a},\mathbf{x}^\mathbf{b})$ to denote $\chi(\sigma,\tau)$. This implies that $\mathbf{x}^\mathbf{a}\mathbf{x}^\mathbf{b}=\chi(\mathbf{x}^\mathbf{a},\mathbf{x}^\mathbf{b})\mathbf{x}^\mathbf{b}\mathbf{x}^\mathbf{a}$. 
We define a function $C:X\times X\rightarrow \kk^*$ by equating
\[
\mathbf{x}^\mathbf{a}\mathbf{x}^\mathbf{b}=C(\mathbf{x}^\mathbf{a},\mathbf{x}^\mathbf{b})\mathbf{x}^\mathbf{a}*\mathbf{x}^\mathbf{b}.
\] 
Note that $C$ forms a bicharacter of the monoid $(X,*)$. Furthermore, \cite[Lemma 2.6(c)]{Taylor} established that
\begin{equation}\label{eq:CChiRel}
\frac{C(\mathbf{x}^\mathbf{a},\mathbf{x}^\mathbf{b})}{C(\mathbf{x}^\mathbf{b},\mathbf{x}^\mathbf{a})}=\chi(\mathbf{x}^\mathbf{a},\mathbf{x}^\mathbf{b}).
\end{equation}

\hspace{1cm}

\begin{remark}
We extend $C$ to the group of monomials in $x_1^{\pm1},\ldots, x_n^{\pm1}$ by
\[
C(\mathbf{x}^{\mathbf{\bm{a}}-\mathbf{\bm{b}}},\mathbf{x}^{\mathbf{\bm{c}}-\mathbf{\bm{d}}})=C(\mathbf{x}^\mathbf{\bm{a}},\mathbf{x}^\mathbf{\bm{c}})C(\mathbf{x}^\mathbf{\bm{a}},\mathbf{x}^\mathbf{\bm{d}})^{-1}C(\mathbf{x}^\mathbf{\bm{b}},\mathbf{x}^\mathbf{\bm{c}})^{-1}C(\mathbf{x}^\mathbf{\bm{b}},\mathbf{x}^\mathbf{\bm{d}}),
\]
where $\mathbf{\bm{a,b,c,d}}\in\mathbb{N}^n$.
\end{remark}

\begin{remark}\label{rem:FrankBook}
It is proved in \cite[Theorem 1.3.6]{FrankBook} that every monomial ideal in a commutative polynomial ring with coefficients in a commutative ring admits a unique set of minimal generators. This proof can be adapted to our context and it is therefore omitted. Moreover in \cite[Theorem 1.1.9]{FrankBook} it is proved that if a monomial belongs to a monomial ideal, then it is a multiple of a minimal generator of the monomial ideal. This proof can also be adapted to our context and it is therefore omitted. 
\end{remark}
Throughout the paper, the $R$-modules are assumed to be $G\times\mathbb{Z}$-graded \emph{right} $R$-modules, which can be turned into bimodules with the following left action
\[
r\cdot m\colonequals \chi(r,m)m\cdot r,
\]
where $r$ is a homogeneous element of $R$ and $m$ is a homogeneous element of an $R$-module. By abuse of notation $\chi(r,m)$ denotes the bicharacter $\chi$ applied to the $G$-degree of $r$ and $m$. We also point out that the elements of $\kk$ can freely move from left to right (and vice versa) when multiplied by an element of a module.

\begin{remark}
All resolutions and complexes over $R$ constructed in this paper have a natural $G\times\mathbb{Z}$-grading for which the differentials become homogeneous with respect to the grading. This grading refines the $\mathbb{Z}$-grading that one is accustomed to in the commutative case.
\end{remark}

\begin{definition}
A monomial ideal $I$ of $R$ is said to have linear quotients if there exists an order on its minimal generators $u_1,\ldots,u_m$ such that the colon ideals 
\[
((u_1,\ldots,u_j):u_{j+1})
\]
are generated by variables for all $j=1,\ldots,m-1$. In this case, if the colon ideal above is generated by the variables $x_{i_1},\ldots,x_{i_l}$, then we define
\[
\set(u_{j+1})\colonequals\{i_1,\ldots,i_l\}.
\]
\end{definition}
We also recall that if $u$ is a monomial, then $\supp(u)=\{i\mid x_i\text{ divides }u\}$.

\begin{example}
The stable ideals defined in \cite[Definition 2.2]{EKskew} are examples of ideals with linear quotients.
\end{example}

Let $I=(u_1,\ldots, u_m)$ be an ideal of $R$ with linear quotients with respect to the given order of the minimal generators. Set $I_j=(u_1,\ldots,u_j)$ for $j=1,\ldots,m$. Let $M(I)$ be the set of all monomials in $I$ and $G(I)$ the set of minimal generators of $I$. We define a map $g:M(I)\rightarrow G(I)$ as $g(u)=u_j$, where $j$ is the smallest integer such that $u\in I_j$. The function $g$ is called the \emph{decomposition function} for $I$. The \emph{complement function} of $I$ is the function $\kappa$ such that $u=g(u)*\kappa(u)$. 

The next lemmas can be proved as in the commutative case and their proofs are therefore omitted, see \cite[Lemma 1.7, Lemma 1.8 and Lemma 1.11]{cone}.

\begin{lemma}
\begin{enumerate}
\item For all $u\in M(I)$ one has $u=g(u)*\kappa(u)$ with
\[
\set(g(u))\cap\supp(\kappa(u))=\emptyset.
\]
\item Let $u$ be a monomial in $I$ such that $u=v*w$ with $v\in G(I)$ and $w$ a monomial such that $\set(v)\cap\supp(w)=\emptyset$. Then $v=g(u)$.
\end{enumerate}
\end{lemma}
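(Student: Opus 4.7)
The plan is to mirror the proofs of Herzog--Takayama (\cite[Lemma 1.7]{cone}) in the skew setting. The only new ingredient is to keep track of scalars in $\kk^*$ when passing between the monoid product $*$ and the ring product, but since both statements concern membership in monomial ideals and supports of monomials, each such scalar is a unit in $\kk^*$ and therefore harmless. The argument then reduces to the minimality of $j$ in the definition $g(u)=u_j$ together with the fact (\Cref{rem:FrankBook}) that a monomial lying in a monomial ideal must be divisible by one of the minimal generators of that ideal.

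For part (1), I would take $j$ to be the smallest integer with $u\in I_j$, so $g(u)=u_j$, and assume $j\ge 2$ (if $j=1$ there is nothing to show, since $\set(u_1)$ is empty). Suppose for contradiction that some $i$ lies in $\set(u_j)\cap\supp(\kappa(u))$. From $i\in\supp(\kappa(u))$ I can factor $\kappa(u)=x_i*w$ for some monomial $w$; from $i\in\set(u_j)$ the definition of $\set$ gives $u_jx_i\in I_{j-1}$. Then
\[
u=u_j*\kappa(u)=u_j*x_i*w
\]
differs by a unit scalar from the ring product $(u_jx_i)w$, which belongs to $I_{j-1}$, contradicting the minimality of $j$.

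For part (2), write $v=u_k$ and $g(u)=u_j$. The equality $u=v*w$ exhibits $u$ as a unit scalar multiple of the ring element $u_kw$, so $u\in u_kR\subseteq I_k$ and hence $j\le k$ by the choice of $j$. Suppose $j<k$. Then $u\in I_j\subseteq I_{k-1}$, so $w\in(I_{k-1}:u_k)$. By the linear quotients hypothesis this colon ideal is generated by the variables $\{x_i:i\in\set(u_k)\}$, and by \Cref{rem:FrankBook} the monomial $w$ must be divisible by one of those generators. This produces an index $i\in\supp(w)\cap\set(v)$, contradicting the disjointness hypothesis. Hence $k=j$, i.e.\ $v=g(u)$.

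The only real obstacle is bookkeeping: one has to invoke \Cref{rem:FrankBook} to conclude in part (2) that a monomial in a variable-generated monomial ideal is actually divisible by one of those variables, and one must observe that passing between the monoid product $*$ and the ring product never affects whether a monomial lies in a monomial ideal (because the discrepancy is a unit in $\kk^*$). Beyond these points the argument is entirely formal and reproduces the commutative reasoning of \cite{cone}, which is presumably why the authors opt to omit the details.
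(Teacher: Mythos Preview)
Your proposal is correct and follows exactly the approach the paper intends: the paper omits the proof and refers to \cite[Lemma~1.7]{cone}, and you have faithfully reproduced that commutative argument while noting that the passage between the monoid product $*$ and the ring product only introduces units from $\kk^*$, which do not affect membership in monomial ideals. There is nothing to add.
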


\begin{lemma}\label{lem:lem1.8}
Let $u,v\in M(I)$. Then $g(u*v)=g(u)$ if and only if
\[
\set(g(u))\cap\supp(v)=\emptyset.
\]
\end{lemma}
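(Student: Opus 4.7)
The plan is to deduce both implications of \Cref{lem:lem1.8} directly from the two parts of the preceding (unnumbered) lemma on the decomposition function. The strategy in both directions is to write $u*v$ as a product of an element of $G(I)$ and a monomial, and then decide whether that factorization is the canonical one provided by $g$ by checking the disjointness condition on $\set$ and $\supp$.

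For the forward direction, I would assume $g(u*v) = g(u)$ and compute $\kappa(u*v)$. Writing $u = g(u)*\kappa(u)$ gives $u*v = g(u)*(\kappa(u)*v)$, while by definition $u*v = g(u*v)*\kappa(u*v) = g(u)*\kappa(u*v)$. Cancelling the common factor $g(u)$ in the free abelian monoid $(X,*)$ yields $\kappa(u*v) = \kappa(u)*v$. Part (1) of the previous lemma, applied to $u*v$, gives
\[
\set(g(u*v)) \cap \supp(\kappa(u*v)) = \emptyset,
\]
i.e.\ $\set(g(u)) \cap \supp(\kappa(u)*v) = \emptyset$. Since $\supp(v) \subseteq \supp(\kappa(u)*v)$, the conclusion $\set(g(u)) \cap \supp(v) = \emptyset$ follows immediately.

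For the reverse direction, I would assume $\set(g(u)) \cap \supp(v) = \emptyset$ and once again factor $u*v = g(u)*(\kappa(u)*v)$. Using part (1) of the previous lemma applied to $u$ together with the hypothesis, I get
\[
\set(g(u)) \cap \supp(\kappa(u)*v) \;\subseteq\; \bigl(\set(g(u)) \cap \supp(\kappa(u))\bigr) \cup \bigl(\set(g(u)) \cap \supp(v)\bigr) = \emptyset.
\]
This exhibits $u*v$ as a product $g(u)*w$ with $g(u) \in G(I)$ and $\set(g(u)) \cap \supp(w) = \emptyset$; invoking part (2) of the previous lemma forces $g(u) = g(u*v)$.

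I do not expect any real obstacle here: the argument is formal and only uses the characterization of $g$ in terms of the disjointness condition. The only subtlety worth flagging is that the cancellation step in the forward direction is legitimate because $(X,*)$ is the free commutative monoid on $x_1,\ldots,x_n$, so multiplicativity of exponent vectors allows us to recover $\kappa(u*v)$ unambiguously; none of the skew commutation scalars enter, since both $g$ and $\kappa$ are defined combinatorially on monomials.
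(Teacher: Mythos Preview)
Your argument is correct and is exactly the approach the paper has in mind: the paper omits the proof and refers to \cite[Lemma~1.8]{cone}, whose proof proceeds precisely by combining the two parts of the preceding lemma as you do. There is nothing to add; your remark that the skew scalars play no role since $g$ and $\kappa$ are purely combinatorial is the reason the commutative proof carries over verbatim.
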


\begin{definition}
We say that the decomposition function $g:M(I)\rightarrow G(I)$ is \emph{regular}, if $\set(g(x_s*u))\subseteq\set(u)$ for all $s\in\set(u)$ and $u\in G(I)$.
\end{definition}

\begin{lemma}\label{lem:swap}
If $g:M(I)\rightarrow G(I)$ is a regular decomposition function, then
\[
g(x_s*g(x_t*u))=g(x_s*x_t*u)=g(x_t*g(x_s*u)),\quad\text{for all}\;u\in M(I)\;\text{and all}\;s,t\in\set(u).
\]
\end{lemma}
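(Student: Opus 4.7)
The plan is to establish the first equality $g(x_s * g(x_t * u)) = g(x_s * x_t * u)$; the second follows by exchanging the roles of $s$ and $t$. The strategy is to expand $x_t*u$ via its decomposition and then argue that the complementary factor $\kappa(x_t*u)$ is invisible to $g(x_s*\,\cdot\,)$, using Lemma \ref{lem:lem1.8} to convert the statement about $g$ into a disjointness condition on supports.

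Set $v:=g(x_t*u)$ and $w:=\kappa(x_t*u)$, so that $v\in G(I)$ and $x_t*u = v*w$. Part (1) of the preceding lemma gives $\set(v)\cap\supp(w)=\emptyset$. Multiplying by $x_s$ yields
\[
x_s*x_t*u \;=\; (x_s*v)*w,
\]
so Lemma \ref{lem:lem1.8}, applied with the first factor $x_s*v$ and the second factor $w$, reduces the desired equality to showing
\[
\set(g(x_s*v))\cap\supp(w)=\emptyset.
\]

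To verify this, I split on whether $s\in\set(v)$. If $s\in\set(v)$, then regularity applied to $v\in G(I)$ and $s\in\set(v)$ gives $\set(g(x_s*v))\subseteq\set(v)$, and the intersection with $\supp(w)$ is empty because $\set(v)\cap\supp(w)=\emptyset$. If $s\notin\set(v)$, then $\set(v)\cap\supp(x_s)=\emptyset$, so Lemma \ref{lem:lem1.8} (applied with $v\in G(I)$ and $x_s$) forces $g(x_s*v)=g(v)=v$; hence $\set(g(x_s*v))=\set(v)$ and the intersection with $\supp(w)$ is again empty. In both cases the required equality follows.

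There is no serious obstacle: the argument is a short bookkeeping exercise in combining the decomposition identity, Lemma \ref{lem:lem1.8}, and the regularity hypothesis. The only point worth highlighting is that regularity is invoked at $v$ (the intermediate minimal generator), not directly at $u$, so the hypothesis $t\in\set(u)$ is only used implicitly through the existence and properties of $v=g(x_t*u)$; the case split on whether $s\in\set(v)$ is what actually drives the proof.
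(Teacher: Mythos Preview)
Your proof is correct and follows exactly the standard argument from \cite[Lemma~1.11]{cone}, which is what the paper defers to (the proof is omitted there with a reference to the commutative case). One minor remark: your applications of Lemma~\ref{lem:lem1.8} use a second factor ($w$, resp.\ $x_s$) that need not lie in $M(I)$; the lemma as stated in the paper asks both arguments to be in $M(I)$, but the proof in \cite{cone} (and the evident adaptation here) only requires the first factor to be in $M(I)$ and the second to be an arbitrary monomial, so this is a harmless imprecision in the paper's transcription rather than a gap in your argument.
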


\section{The Herzog-Takayama resolution}
With the notation set at the beginning of the previous section, let $L_j$ be the ideal $(I_j:u_{j+1})$ for $j=1,\ldots, m-1$. For given monomials $m_1,\ldots,m_s$, let $\bbT_\b(m_1,\ldots,m_s)$ be the skew Taylor resolution of the ideal $(m_1,\ldots,m_s)$ as defined in \cite[Construction 3.1]{Taylor}. We will be denoting by $e_\sigma$ with $\sigma\subseteq[s]$ the basis elements of $\bbT_\b$.

\begin{remark}
We recall that the least common multiple of the monomials $\mathbf{x}^\mathbf{a},\mathbf{x}^\mathbf{b}$ in the skew polynomial ring $R$ is $\mathbf{x}^\mathbf{c}$ where $c_i=\max\{a_i,b_i\}$. Similarly for the least common multiple of more than two monomials.
\end{remark}

\begin{definition}
Let $m_1,\ldots,m_s$ be monomials minimally generating an ideal $I$ and let $u$ be any monomial. The \emph{skew Taylor resolution $\tbbT_\b(m_1,\ldots,m_s;u)$ of $I$ twisted by $u$} is the resolution defined as $\tbbT_i=R^{\binom{s}{i}}$ and, denoting by $e(\sigma;u)$ with $\sigma\subseteq[s]$ a basis of $\tbbT_i$, with differential given by
\[
\partial^{\tbbT}(e(\sigma;u))= \sum_{i \in \sigma} e(\sigma_i;u) (-1)^{\alpha(\sigma;i)} C\left(m_{\sigma_i}*u, \frac{m_\sigma}{m_{\sigma_i}}\right)^{-1} \frac{m_\sigma}{m_{\sigma_i}},
\]
where $\alpha(\sigma;i)=|\{j\in\sigma\mid j<i\}|$, the monomial $m_\sigma$ is the least common multiple of the monomials $m_i$ with $i\in\sigma$, and $\sigma_i$ denotes the set $\sigma$ with $i$ removed.
\end{definition}
We point out that the skew Taylor resolution $\bbT_\b(m_1,\ldots,m_s)$ is just $\tbbT_\b(m_1,\ldots,m_s;1)$, in which case the basis elements $e(\sigma;1)$ will just be denoted by $e_\sigma$. To prove that $\tbbT_\b(m_1,\ldots,m_s;u)$ is indeed a resolution (and a complex), we prove that it is isomorphic to $\bbT_\b(m_1,\ldots,m_s)$.

\begin{proposition}\label{prop:twisted}
The complexes $\tbbT_\b(m_1,\ldots,m_s;u)$ and $\bbT_\b(m_1,\ldots,m_s)$ are isomorphic.
\end{proposition}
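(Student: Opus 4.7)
The plan is to exhibit an explicit chain isomorphism $\varphi \colon \tbbT_\b(m_1,\ldots,m_s;u) \to \bbT_\b(m_1,\ldots,m_s)$ that rescales each basis element by a scalar absorbing the twist by $u$. Concretely, for each $\sigma \subseteq [s]$ I would set
\[
\varphi(e(\sigma;u)) \colonequals C(u, m_\sigma)^{-1}\, e_\sigma
\]
and extend $R$-linearly. Since each $C(u,m_\sigma)^{-1} \in \kk^*$ is a unit and $\kk$ is central in $R$, this is a graded $R$-module isomorphism in every homological degree, so only the chain map condition requires verification.

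The main computation is to compare $\varphi \circ \partial^{\tbbT}$ with $\partial^{\bbT} \circ \varphi$ on a basis element $e(\sigma;u)$. Matching the coefficients of each $e_{\sigma_i}\cdot(m_\sigma/m_{\sigma_i})$, the claim reduces to the scalar identity
\[
C(u,m_{\sigma_i})^{-1}\, C(m_{\sigma_i}*u,\, m_\sigma/m_{\sigma_i})^{-1} = C(u,m_\sigma)^{-1}\, C(m_{\sigma_i},\, m_\sigma/m_{\sigma_i})^{-1}.
\]
Since $C$ is a bicharacter on $(X,*)$, the left-hand $C$-factor expands as $C(m_{\sigma_i}*u,\, m_\sigma/m_{\sigma_i}) = C(m_{\sigma_i},\, m_\sigma/m_{\sigma_i})\, C(u,\, m_\sigma/m_{\sigma_i})$, and writing $m_\sigma = m_{\sigma_i}*(m_\sigma/m_{\sigma_i})$ gives $C(u,m_\sigma) = C(u,m_{\sigma_i})\, C(u,m_\sigma/m_{\sigma_i})$. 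Substituting both decompositions and cancelling the common factor $C(u, m_\sigma/m_{\sigma_i})$ yields the required equality.

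Once this intertwining holds, $\varphi$ is an $R$-linear chain map that is bijective in each homological degree, hence an isomorphism of (graded) complexes. In particular, $\tbbT_\b(m_1,\ldots,m_s;u)$ inherits from $\bbT_\b(m_1,\ldots,m_s)$ both the property of being a complex and that of resolving $R/I$. I do not foresee a serious obstacle: the only care required is bookkeeping the two bicharacter expansions and their cancellation, together with checking that $\varphi$ respects the $G\times\bbZ$-grading, which is automatic since the scaling factors lie in $\kk^*$.
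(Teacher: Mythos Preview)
Your proposal is correct and takes essentially the same approach as the paper: both define a diagonal rescaling by the scalars $C(u,m_\sigma)^{\pm 1}$ and reduce the chain-map condition to the same bicharacter identity. The only cosmetic difference is that the paper builds the map in the opposite direction, $\bbT_\b\to\tbbT_\b$ via $e_\sigma\mapsto e(\sigma;u)\,C(u,m_\sigma)$, and verifies the identity through a short chain of equivalent equalities rather than the direct bicharacter expansion you give.
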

\begin{proof}
Let $\varphi:\bbT_\b\rightarrow\tbbT_\b$ be the map defined as $\varphi(e_\sigma)=e(\sigma;u)C(u,m_\sigma)$. This map is clearly invertible, we check that this map commutes with the differential.\\
Applying $\varphi$ to $\partial^{\bbT}(e_{\sigma})$ yields
\begin{equation*}
    \begin{split}
        \varphi (\partial^{\bbT}(e_{\sigma})) & = \varphi\left(\sum_{i \in \sigma} e_{\sigma_i}(-1)^{\alpha(\sigma;i)} C \left(m_{\sigma_i},\frac{m_{\sigma}}{m_{\sigma_i}}\right)^{-1} \frac{m_{\sigma}}{m_{\sigma_i}} \right) \\
        & = \sum_{i\in\sigma} e(\sigma_i;u)(-1)^{\alpha(\sigma;i)} C(u, m_{\sigma_i}) C \left(m_{\sigma_i},\frac{m_{\sigma}}{m_{\sigma_i}}\right)^{-1} \frac{m_{\sigma}}{m_{\sigma_i}}.
    \end{split}
\end{equation*}
Applying $\partial^{\tbbT}$ to $\varphi(e_{\sigma})$ yields
\begin{equation*}
    \begin{split}
        \partial^{\tbbT}(\varphi(e_{\sigma})) & = \partial^{\tbbT} \left( C(u,m_{\sigma})e(\sigma;u) \right)\\
        & = \sum_{i\in\sigma} e(\sigma_i;u)(-1)^{\alpha(\sigma;i)} C(u, m_{\sigma}) C \left(m_{\sigma_i}*u,\frac{m_{\sigma}}{m_{\sigma_i}}\right)^{-1} \frac{m_{\sigma}}{m_{\sigma_i}} .
    \end{split}
\end{equation*}
To finish the proof we need to show that

\[C(u, m_{\sigma_i}) C\left(m_{\sigma_i},\frac{m_{\sigma}}{m_{\sigma_i}}\right)^{-1} = C(u, m_{\sigma}) C\left(m_{\sigma_i}*u,\frac{m_{\sigma}}{m_{\sigma_i}}\right)^{-1}.
\]
Indeed, the previous equality is equivalent to the following ones
\begin{align*}
C(u, m_{\sigma})C(u, m_{\sigma_i})^{-1} &= C\left(m_{\sigma_i} * u,\frac{m_{\sigma}}{m_{\sigma_i}}\right)C\left(m_{\sigma_i},\frac{m_{\sigma}}{m_{\sigma_i}}\right)^{-1}\\
C(u, m_{\sigma})C(u, m_{\sigma_i}^{-1}) &= C\left(m_{\sigma_i} * u,\frac{m_{\sigma}}{m_{\sigma_i}}\right)C\left(m_{\sigma_i}^{-1},\frac{m_{\sigma}}{m_{\sigma_i}}\right)\\
C\left(u,\frac{m_{\sigma}}{m_{\sigma_i}}\right) &= C\left(\frac{m_{\sigma_i}*u}{m_{\sigma_i}},\frac{m_{\sigma}}{m_{\sigma_i}}\right)\\
C\left(u,\frac{m_{\sigma}}{m_{\sigma_i}}\right) &= C\left(u,\frac{m_{\sigma}}{m_{\sigma_i}}\right)\qedhere
\end{align*}
\end{proof}

\begin{remark}
Let $s$ be a positive integer. It follows from \cite[Construction 2.10]{FM} that the skew Koszul complex on the monomials $m_1,\ldots,m_s$ is the complex $\bbK_\b(m_1,\ldots,m_s)$ defined as follows: in degree $i$ one has $\bbK_i=R^{\binom{s}{i}}$ with a basis given by $e_\sigma$ where $\sigma$ is a subset of $[s]$ of size $i$. The differential is given by
\[
\partial^\bbK(e_\sigma)=\sum_{r=1}^j(-1)^{r-1}e_{\sigma_{i_r}}\chi(m_{i_r},m_{i_{r+1}}*\cdots*m_{i_j}) m_{i_r},
\]
where $\sigma=\{i_1,\ldots,i_j\}$ is a subset of $[s]$ with $i_1<\cdots< i_j$. When $r=j$ in the previous summation, the second argument of $\chi$ is $1$.
\end{remark}

In the commutative case, the Koszul complex on a set of monomials with disjoint support and the corresponding Taylor resolution coincide. In the skew case these complexes do not coincide in general, but they are isomorphic, as the next Proposition shows. In the proof of \Cref{thm:main}, we will resolve modules of the form $R/I$, where $I$ is generated by variables, by using a twisted skew Taylor resolution, instead of a skew Koszul complex. If we were to use the skew Koszul complex, we would get a resolution isomorphic to the skew Herzog-Takayama resolution (defined below), but with a less pleasing differential.

\begin{proposition}
If the monomials $m_1,\ldots,m_s$ have disjoint support, then the complexes $\bbK_\b(m_1,\ldots,m_s)$ and $\bbT_\b(m_1,\ldots,m_s)$ are isomorphic.
\end{proposition}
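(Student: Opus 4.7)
The plan is to exhibit an explicit isomorphism of complexes $\psi \colon \bbK_\b(m_1,\ldots,m_s) \to \bbT_\b(m_1,\ldots,m_s)$ of the diagonal form $\psi(e_\sigma) = c_\sigma\, e_\sigma$, where $c_\sigma \in \kk^*$ is a scalar depending on $\sigma$. Since both complexes have free modules of the same rank in each homological degree with bases indexed by the subsets $\sigma \subseteq [s]$, any choice of nonzero scalars $c_\sigma$ yields an isomorphism of graded $R$-modules, and all that remains is to solve for the $c_\sigma$ so that $\psi$ intertwines the two differentials.

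First I would use the disjoint-support hypothesis to rewrite the Taylor differential in a form directly comparable to the Koszul one. For $\sigma = \{i_1 < \cdots < i_j\}$, disjointness of supports gives $m_\sigma = m_{i_1} * \cdots * m_{i_j}$ and $m_\sigma / m_{\sigma_{i_r}} = m_{i_r}$; together with the observation that $\alpha(\sigma; i_r) = r-1$, the Taylor differential becomes
\[
\partial^{\bbT}(e_\sigma) = \sum_{r=1}^j (-1)^{r-1} e_{\sigma_{i_r}}\, C(m_{\sigma_{i_r}}, m_{i_r})^{-1} m_{i_r},
\]
while the Koszul differential is $\sum_{r=1}^j (-1)^{r-1} e_{\sigma_{i_r}}\, \chi(m_{i_r}, m_{i_{r+1}} * \cdots * m_{i_j})\, m_{i_r}$. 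Hence $\psi$ is a chain map exactly when, for each $\sigma$ and each $r$,
\[
\frac{c_\sigma}{c_{\sigma_{i_r}}} = C(m_{\sigma_{i_r}}, m_{i_r})\, \chi(m_{i_r}, m_{i_{r+1}} * \cdots * m_{i_j}).
\]

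I would then set $c_\sigma := \prod_{1 \le a < b \le j} C(m_{i_a}, m_{i_b})$ and verify this recursion directly. Expanding $C(m_{\sigma_{i_r}}, m_{i_r}) = \prod_{k \ne r} C(m_{i_k}, m_{i_r})$ by the bicharacter property of $C$ on $(X,*)$, and using \eqref{eq:CChiRel} to rewrite $\chi(m_{i_r}, m_{i_{r+1}} * \cdots * m_{i_j})$ as $\prod_{k > r} C(m_{i_r}, m_{i_k})/C(m_{i_k}, m_{i_r})$, the right-hand side telescopes to $\prod_{k < r} C(m_{i_k}, m_{i_r}) \cdot \prod_{k > r} C(m_{i_r}, m_{i_k})$, which is precisely the product of the factors deleted from $c_\sigma$ when the index $i_r$ is removed from $\sigma$. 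The only real content of the argument is guessing this normalization $c_\sigma$; once the correct product is chosen, the verification is a short bicharacter manipulation, and I do not anticipate any further obstacle beyond this combinatorial bookkeeping.
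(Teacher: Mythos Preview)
Your proposal is correct and is essentially the same as the paper's proof: both construct a diagonal isomorphism $e_\sigma \mapsto c_\sigma e_\sigma$ with the same normalizing scalar (the paper writes it as $\prod_{r=2}^j C(m_{i_1}*\cdots*m_{i_{r-1}}, m_{i_r})^{-1}$, which by the bicharacter property equals your $c_\sigma^{-1}$, and the paper's map goes in the direction $\bbT_\b \to \bbK_\b$, the inverse of yours). Your presentation, which first isolates the recursion $c_\sigma/c_{\sigma_{i_r}}$ and then verifies it, is somewhat cleaner than the paper's direct expansion of $\psi\partial^{\bbT}$ and $\partial^{\bbK}\psi$, but the content is identical.
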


\begin{proof}
We define a map $\psi:\bbT_\b\rightarrow\bbK_\b$ as follows: if $\sigma=\{i_1,\ldots,i_j\}$ with $i_1<\cdots<i_j$, then
\[
\psi(e_\sigma)=e_\sigma\prod_{r=2}^jC(m_{i_1}*\cdots *m_{i_{r-1}},m_{i_r})^{-1}.
\]
It is clear that $\psi$ is bijective, we prove that it is a chain map. Below we make use of the fact that the least common multiple of monomials with disjoint supports is the product of the monomials. Indeed, 

\begin{align*}
    \psi \partial^{\bbT}(e_\sigma) &= \psi\left(\sum_{r=1}^{j}e_{\sigma_{i_r}} (-1)^{r-1}C(m_{i_1}*\cdots *m_{i_{r-1}}*m_{i_{r+1}}*\cdots *m_{i_j}, m_{i_r})^{-1}m_{i_r}\right)\\
    &= \sum_{r=1}^{j}e_{\sigma_{i_r}} (-1)^{r-1}\prod_{l=2}^{r-1}C(m_{i_1}*\cdots*m_{i_{l-1}}, m_{i_l})^{-1}\\
    &\phantom{====}\cdot\prod_{l=r+1}^{j}C(m_{i_1}*\cdots * m_{i_{r-1}}*m_{i_{r+1}}*\cdots *m_{i_{l-1}}, m_{i_l})^{-1}\\ 
  &\phantom{====} \cdot C(m_{i_1}*\cdots *m_{i_{r-1}}*m_{i_{r+1}}*\cdots *m_{i_j}, m_{i_r})^{-1}m_{i_r}\\
      &= \sum_{r=1}^{j}e_{\sigma_{i_r}} (-1)^{r-1}\prod_{l=2}^{r-1}C(m_{i_1}*\cdots *m_{i_{l-1}}, m_{i_l})^{-1} \\
&\phantom{====}\cdot\prod_{l=r+1}^{j}C(m_{i_1}*\cdots *m_{i_{r-1}}*m_{i_{r+1}}*\cdots*m_{i_{l-1}}, m_{i_l})^{-1}\\ 
  &\phantom{====} \cdot C(m_{i_1}*\cdots *m_{i_{r-1}}, m_{i_r})^{-1}C(m_{i_{r+1}}*\cdots *m_{i_j}, m_{i_r})^{-1}m_{i_r}.\\ 
\end{align*}

Now we compute $\partial^\bbK(\psi(e_\sigma))$ and show that it coincides with $\psi(\partial^\bbT(e_\sigma))$. Indeed, using \eqref{eq:CChiRel} to write $\partial^\bbK$ in terms of the bicharacter $C$, we get that $\partial^{\bbK}(\psi(e_\sigma))$ is equal to

\begin{align*}
    &\phantom{=} \partial^{\bbK}\left(e_\sigma\prod_{l=2}^{j} C(m_{i_1}*\cdots* m_{i_{l-1}}, m_{i_l})^{-1}  \right)\\
    &= \sum_{r=1}^{j}(-1)^{r-1}e_{\sigma_{i_r}}\left(\prod_{l=2}^{j} C(m_{i_1}*\cdots *m_{i_{l-1}}, m_{i_l})^{-1}\right)\\
    &\phantom{==}\cdot C(m_{i_r}, m_{i_{r+1}}* \cdots * m_{i_j})C(m_{i_{r+1}}*\cdots *m_{i_j}, m_{i_r})^{-1}m_{i_r}\\
    &= \sum_{r=1}^{j}(-1)^{r-1}e_{\sigma_{i_r}}\\
    &\phantom{==}\cdot\left(C(m_{i_1}*\cdots* m_{i_{r-1}}, m_{i_r})^{-1}\prod_{l=2}^{r-1} C(m_{i_1}*\cdots *m_{i_{l-1}}, m_{i_l})^{-1}\prod_{l=r+1}^{j} C(m_{i_1}* \cdots *m_{i_{l-1}}, m_{i_l})^{-1}\right)\\
    &\phantom{==} \cdot
     C(m_{i_r}, m_{i_{r+1}}* \cdots *m_{i_j})C(m_{i_{r+1}}*\cdots *m_{i_j}, m_{i_r})^{-1}m_{i_r}\\
    &= \sum_{r=1}^{j}(-1)^{r-1}e_{\sigma_{i_r}}C(m_{i_1}*\cdots *m_{i_{r-1}}, m_{i_r})^{-1}\prod_{l=2}^{r-1} C(m_{i_1}*\cdots *m_{i_{l-1}}, m_{i_l})^{-1} \\
    &\phantom{==}\cdot \prod_{l=r+1}^{j}\left( C(m_{i_1}*\cdots *m_{i_{r-1}}*m_{i_{r+1}}* \cdots *m_{i_{l-1}}, m_{i_l})^{-1}C(m_{i_r}, m_{i_l})^{-1}\right)\\
    &\phantom{==}\cdot C(m_{i_r}, m_{i_{r+1}}* \cdots *m_{i_j})C(m_{i_{r+1}}*\cdots *m_{i_j}, m_{i_r})^{-1}m_{i_r}\\
    &= \sum_{r=1}^{j}(-1)^{r-1}e_{\sigma_{i_r}}C(m_{i_1}*\cdots *m_{i_{r-1}}, m_{i_r})^{-1}\prod_{l=2}^{r-1} C(m_{i_1}*\cdots * m_{i_{l-1}}, m_{i_l})^{-1} \\
    &\phantom{==}\cdot \prod_{l=r+1}^{j} C(m_{i_1} * \cdots * m_{i_{r-1}}*m_{i_{r+1}}* \cdots * m_{i_{l-1}}, m_{i_l})^{-1}\cancel{C(m_{i_r}, m_{i_{r+1}} * \cdots * m_{i_j})^{-1}}\\
    &\phantom{==}\cdot\cancel{C(m_{i_r}, m_{i_{r+1}} * \cdots * m_{i_j})}C(m_{i_{r+1}} * \cdots * m_{i_j}, m_{i_r})^{-1}m_{i_r}\\
\end{align*}

By comparing the coefficients we can see that $\psi \partial^{\bbT}(e_\sigma) = \partial^{\bbK}(\psi(e_\sigma)).$
\end{proof}

\begin{construction}
Let $I=(u_1,\ldots,u_m)$ be an ideal with linear quotients (for the given order on the generators) with regular decomposition function $g$ and complement factor $\kappa$. We define a complex $\bbHT_\b(u_1,\ldots,u_m)$ as follows: $\bbHT_i$ is the free module on the symbols $e(\sigma;u)$ with $u\in G(I)$ and $\sigma\subseteq\set(u)$ with $|\sigma|=i-1$ for $i\geq1$, while $\bbHT_0=R$. If $\sigma\neq\emptyset$, then the differential on $\bbHT_\b$ is given by

\begin{align*}
\partial^{\bbHT}(e(\sigma;u))=&-\sum_{t\in\sigma}e(\sigma_t;u)(-1)^{\alpha(\sigma;t)}C(x_{\sigma_t}*u,x_t)^{-1}x_t\\
&+
\sum_{t\in\sigma}e(\sigma_t;g(x_t*u))(-1)^{\alpha(\sigma;t)}C\left(x_{\sigma_t}*g(x_t*u),\kappa(x_t*u)\right)^{-1}\kappa(x_t*u),
\end{align*}
where we set $e(\sigma;u)=0$ if $\sigma\not\subseteq\set(u)$, and where $x_\sigma$ denotes the product of the variables $x_i$ with $i\in\sigma$. If $\sigma=\emptyset$, then
\[
\partial^\bbHT(e(\emptyset;u))=u.
\]
\end{construction}

The fact that $\bbHT_\b(u_1,\ldots, u_m)$ is a complex will follow from the proof of \Cref{thm:main}.

\begin{theorem}\label{thm:main}
Let $I=(u_1,\ldots,u_m)$ be an ideal with linear quotients (for the given order on the generators) and regular decomposition function. Then, the complex $\bbHT_\b(u_1,\ldots,u_m)$ is a minimal free resolution of $R/I$.
\end{theorem}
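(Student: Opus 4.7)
The strategy is to construct $\bbHT_\b(u_1,\ldots,u_m)$ as an iterated mapping cone, following Herzog-Takayama \cite{cone}. Setting $L_j=(I_j:u_{j+1})$, the linear-quotients hypothesis gives $L_j=(x_i\mid i\in\set(u_{j+1}))$, so the short exact sequence
\[
0\to R/L_j\xrightarrow{\cdot u_{j+1}}R/I_j\to R/I_{j+1}\to 0
\]
reduces resolving $R/I_{j+1}$ to producing a chain-level lift of $\cdot u_{j+1}$ and forming the mapping cone. Inducting on $j$, I build resolutions $F^{(j)}$ of $R/I_j$ and identify each with the subcomplex of $\bbHT_\b(u_1,\ldots,u_m)$ spanned by $1\in\bbHT_0$ together with the basis elements $e(\sigma;u_k)$ for $k\leq j$; the base case $F^{(1)}\colon 0\to R\xrightarrow{u_1}R$ is immediate.

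For the inductive step I resolve $R/L_j$ by the twisted skew Taylor resolution $\tbbT_\b(x_{i_1},\ldots,x_{i_l};u_{j+1})$ of \Cref{prop:twisted}, where $\set(u_{j+1})=\{i_1,\ldots,i_l\}$. Its basis elements $e(\sigma;u_{j+1})$ for $\sigma\subseteq\set(u_{j+1})$ are exactly the new generators that appear in $\bbHT_\b$ at step $j+1$, and $-\partial^{\tbbT}$ reproduces the first sum in $\partial^{\bbHT}$. I define the comparison map $\phi\colon\tbbT_\b(x_{\set(u_{j+1})};u_{j+1})\to F^{(j)}$ by $\phi_0(e(\emptyset;u_{j+1}))=u_{j+1}$ and, for $\sigma\neq\emptyset$,
\[
\phi(e(\sigma;u_{j+1}))=\sum_{t\in\sigma}e(\sigma_t;g(x_t*u_{j+1}))(-1)^{\alpha(\sigma;t)}C\bigl(x_{\sigma_t}*g(x_t*u_{j+1}),\kappa(x_t*u_{j+1})\bigr)^{-1}\kappa(x_t*u_{j+1}),
\]
with the convention $e(\tau;v)=0$ when $\tau\not\subseteq\set(v)$. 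This is precisely the second sum appearing in $\partial^{\bbHT}$, so by construction the mapping-cone differential on $\cone(\phi)$ agrees with $\partial^{\bbHT}$ on the new basis elements. Once $\phi$ is verified to be a chain map lifting $\cdot u_{j+1}$, the mapping-cone formalism closes the induction and, when $j+1=m$, identifies $\bbHT_\b$ with a free resolution of $R/I$.

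The main obstacle is proving the chain-map identity $\phi\,\partial^{\tbbT}=\partial^{F^{(j)}}\phi$ on a general basis element $e(\sigma;u_{j+1})$. Both sides expand into double sums indexed by ordered pairs $(t,s)$ of distinct elements of $\sigma$, with scalar factors involving $C$ and inner generators involving iterated applications of $g$ and $\kappa$. The key inputs are \Cref{lem:swap} (which yields $g(x_s*g(x_t*u_{j+1}))=g(x_t*g(x_s*u_{j+1}))$ and so pairs up the $(t,s)$ and $(s,t)$ contributions), \Cref{lem:lem1.8} (which controls when a term vanishes), and the bicharacter identity \eqref{eq:CChiRel} (which reconciles the various $C$-factors). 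After bookkeeping of the signs $(-1)^{\alpha(\sigma;t)+\alpha(\sigma_t;s)}$ and the scalar factors, the paired contributions combine to match and the identity follows; the low-degree checks $|\sigma|=0,1$ reduce via $g(x_t*u_{j+1})*\kappa(x_t*u_{j+1})=x_t*u_{j+1}$ and the defining relation of $C$.

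Finally, for minimality, each coefficient in $\partial^{\bbHT}$ is a nonzero scalar times either a variable $x_t$ or the monomial $\kappa(x_t*u)$. If $\kappa(x_t*u)=1$ for some $u\in G(I)$, then $g(x_t*u)=x_t*u$ would be a minimal generator of $I$ that is a proper multiple of $u\in G(I)$, contradicting the uniqueness of the minimal generating set in \Cref{rem:FrankBook}. Hence all entries of the differential lie in the graded maximal ideal of $R$, completing the proof of minimality.
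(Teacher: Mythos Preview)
Your proposal is correct and follows essentially the same approach as the paper: the iterated mapping-cone construction, the twisted skew Taylor resolution on $R/L_j$, and the comparison map $\phi$ you write down coincide (via the bicharacter property of $C$) with the paper's $\psi^{(j)}$, with the chain-map identity verified through the same pairing of $(s,t)$-terms using \Cref{lem:swap} and \Cref{lem:lem1.8}. Your explicit minimality argument via $\kappa(x_t*u)\neq 1$ is a welcome addition that the paper leaves implicit.
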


\begin{proof}
We prove by induction on $j$ that the complex $\bbHT_\b(u_1,\ldots,u_j)$ resolves the ideal $(u_1,\ldots,u_j)$, the case $j=1$ being clear. We denote by $\bbHT^{(j)}_\b$ the complex $\bbHT_\b(u_1,\ldots,u_j)$.

Let $I=(u_1,\ldots,u_m)$ have linear quotients with respect to the given order. Let $I_j=(u_1,\ldots, u_j)$ for $j=1,\ldots,m$ and $L_j=(I_j:u_{j+1})$ for $j=1,\ldots,m-1$. Let $\set(u_{j+1})=\{k_1,\ldots,k_l\}$, therefore $L_j=(x_{k_1},\ldots,x_{k_l})$. Consider the following short exact sequence
\[
0\rightarrow R/L_j\rightarrow R/I_j\rightarrow R/I_{j+1}\rightarrow0,
\]
where we are using that $I_{j+1}/I_j\cong R/L_j$.
The module $R/L_j$ is resolved by $\tbbT_\b(x_{k_1},\ldots,x_{k_l};u_{j+1})$. The module $R/I_j$ is resolved by $\bbHT^{(j)}_\b$. Let 
\[
\psi^{(j)}_\b:\tbbT_\b(x_{k_1},\ldots,x_{k_l};u_{j+1})\rightarrow \bbHT^{(j)}_\b
\]
be a lifting of the map $R/L_j\rightarrow R/I_j$. It suffices to show that $\bbHT^{(j+1)}_\b$ coincides with the mapping cone of $\psi^{(j)}_\b$, which we denote by $\cone(\psi_\b^{(j)})$.

Since $\bbHT_\b^{(j)}$ is a subcomplex of $\cone(\psi_\b^{(j)})$, we only need to check that
\[
\partial^{\cone(\psi_\b^{(j)})}(e(\sigma;u_{j+1}))=\partial^{\bbHT^{(j+1)}}(e(\sigma;u_{j+1})),\quad\text{for all }\sigma\subseteq\set(u_{j+1}).
\]
By definition of the differential of the mapping cone, it follows that
\[
\partial^{\cone(\psi_\b^{(j)})}(e(\sigma;u_{j+1}))=-\partial^{\tbbT}(e(\sigma;u_{j+1}))+\psi^{(j)}(e(\sigma;u_{j+1})),
\]
therefore, proving that the following choice of $\psi^{(j)}_\b$ is indeed a lifting will complete the argument. We define $\psi^{(j)}(e(\sigma;u_{j+1}))$ to be

\[
\sum_{t\in\sigma}e(\sigma_t;g(x_t*u_{j+1}))(-1)^{\alpha(\sigma;t)}C(x_{\sigma_t},\kappa(x_t*u_{j+1}))^{-1}C(g(x_t*u_{j+1}),\kappa(x_t*u_{j+1}))^{-1}\kappa(x_t*u_{j+1}),
\]
if $\sigma\neq\emptyset$ and $\psi^{(j)}(e(\emptyset;u_{j+1}))=u_{j+1}$ otherwise.
To verify this we must prove that $\psi^{(j)}\partial^{\tbbT}=\partial^{\bbHT^{(j)}}\psi^{(j)}$. 
In the following, if $s\neq t$, then $\sigma_{t,s}$ denotes the subset $\sigma$ with $s$ and $t$ removed. We start by computing $ \psi^{(j)}\partial^{\tbbT}(e(\sigma;u_{j+1}))=$
\begin{align*}
 &\hphantom{=} \sum_{t\in \sigma} \psi^{(j)}(e(\sigma_t;u_{j+1}))(-1)^{\alpha(\sigma;t)}C(x_{\sigma_t}*u_{j+1}, x_t)^{-1}x_t\\
    &= \sum_{t\in \sigma} \biggl( \sum_{s\in \sigma_t}e(\sigma_{t,s};g(x_s*u_{j+1}))(-1)^{\alpha(\sigma_t;s)}C(x_{\sigma_{t,s}},\kappa(x_s*u_{j+1}))^{-1} C(g(x_s*u_{j+1}),\kappa(x_s*u_{j+1}))^{-1}\\ 
    &\hspace{2cm} \cdot\kappa(x_s*u_{j+1}) \biggr) (-1)^{\alpha(\sigma;t)}C(x_{\sigma_t}*u_{j+1}, x_t)^{-1}x_t\\
    &= \sum_{t\in \sigma}\sum_{\substack{s\in \sigma_t\\ s<t}} e(\sigma_{t,s};g(x_s*u_{j+1}))(-1)^{\alpha(\sigma;t)+\alpha(\sigma;s)}C(x_{\sigma_{t,s}},\kappa(x_s*u_{j+1}))^{-1} C(g(x_s*u_{j+1}),\kappa(x_s*u_{j+1}))^{-1}\\* &\hspace{2cm}\cdot\kappa(x_s*u_{j+1})C(x_{\sigma_t}*u_{j+1}, x_t)^{-1}x_t\\
    &- \sum_{t\in \sigma}\sum_{\substack{s\in \sigma_t\\ s>t}} e(\sigma_{t,s};g(x_s*u_{j+1}))(-1)^{\alpha(\sigma;t)+\alpha(\sigma;s)}C(x_{\sigma_{t,s}},\kappa(x_s*u_{j+1}))^{-1} C(g(x_s*u_{j+1}),\kappa(x_s*u_{j+1}))^{-1}\\ &\hspace{2cm}\cdot\kappa(x_s*u_{j+1})C(x_{\sigma_t}*u_{j+1}, x_t)^{-1}x_t\\
    &= \sum_{t\in \sigma}\sum_{\substack{s\in \sigma_t\\ s<t}} e(\sigma_{t,s};g(x_s*u_{j+1}))(-1)^{\alpha(\sigma;t)+\alpha(\sigma;s)}C(x_{\sigma_{t,s}},\kappa(x_s*u_{j+1}))^{-1} C(g(x_s*u_{j+1}),\kappa(x_s*u_{j+1}))^{-1}\\ &\hspace{2cm}\cdot\kappa(x_s*u_{j+1})C(x_{\sigma_t}*u_{j+1}, x_t)^{-1}x_t\\
    &-\sum_{s\in \sigma}\sum_{\substack{t\in \sigma_s\\ s<t}} e(\sigma_{t,s};g(x_t*u_{j+1}))(-1)^{\alpha(\sigma;t)+\alpha(\sigma;s)}C(x_{\sigma_{s,t}},\kappa(x_t*u_{j+1}))^{-1} C(g(x_t*u_{j+1}),\kappa(x_t*u_{j+1}))^{-1}\\ &\hspace{2cm}\cdot\kappa(x_t*u_{j+1})C(x_{\sigma_s}*u_{j+1}, x_s)^{-1}x_s\\
    &= \sum_{t\in \sigma}\sum_{\substack{s\in \sigma_t\\ s<t}} e(\sigma_{t,s};g(x_s*u_{j+1}))(-1)^{\alpha(\sigma;t)+\alpha(\sigma;s)}C(x_{\sigma_{t,s}},\kappa(x_s*u_{j+1}))^{-1} C(g(x_s*u_{j+1}),\kappa(x_s*u_{j+1}))^{-1}\\ &\hspace{2cm} \cdot C(x_{\sigma_t}*u_{j+1}, x_t)^{-1}C(\kappa(x_s*u_{j+1}),x_t)\kappa(x_s*u_{j+1})*x_t\\
    &-\sum_{s\in \sigma}\sum_{\substack{t\in \sigma_s\\ s<t}}
    e(\sigma_{t,s};g(x_t*u_{j+1}))(-1)^{\alpha(\sigma;t)+\alpha(\sigma;s)}C(x_{\sigma_{s,t}},\kappa(x_t*u_{j+1}))^{-1} C(g(x_t*u_{j+1}),\kappa(x_t*u_{j+1}))^{-1}\\ &\hspace{2cm}\cdot C(x_{\sigma_s}*u_{j+1}, x_s)^{-1}C(\kappa(x_t*u_{j+1}),x_s)\kappa(x_t*u_{j+1})*x_s.
\end{align*}
Now we compute $\partial^{\bbHT^{(j)}}\psi^{(j)}(e(\sigma;u_{j+1}))$

\begin{align*}
 \partial^{\bbHT^{(j)}}\psi^{(j)}(e(\sigma;u_{j+1})) = \sum_{t \in \sigma} & \partial^{\bbHT^{(j)}}(e(\sigma_t;g(x_t*u_{j+1})))(-1)^{\alpha(\sigma;t)}C(x_{\sigma_t},\kappa(x_t*u_{j+1}))^{-1}  \\
    &\cdot C(g(x_t*u_{j+1}), \kappa(x_t*u_{j+1}))^{-1} \kappa(x_t*u_{j+1}),
\end{align*}

we expand $\partial^{\bbHT^{(j)}}(e(\sigma_t;g(x_t*u_{j+1})))$ below

\begin{align*}
-\sum_{s \in \sigma_t} &e(\sigma_{t,s};g(x_t*u_{j+1}))(-1)^{\alpha(\sigma_t;s)}C(x_{\sigma_{t,s}}*g(x_t*u_{j+1}),x_s)^{-1}x_s\\
      + \sum_{s \in \sigma_t} &e(\sigma_{t,s};g(x_s*g(x_t*u_{j+1})))
    (-1)^{\alpha(\sigma_t;s)}
    C \left( x_{\sigma_{t,s}}, \frac{x_s*g(x_t*u_{j+1})}{g(x_s*g(x_t*u_{j+1}))}\right)^{-1}\\
     & \cdot C\left( g(x_s*g(x_t*u_{j+1})), \frac{x_s*g(x_t*u_{j+1})}{g(x_s*g(x_t*u_{j+1}))} \right)^{-1}
    \frac{x_s*g(x_t*u_{j+1})}{g(x_s*g(x_t*u_{j+1}))}.
\end{align*}

Before continuing the proof we notice that it may happen that $\sigma_t\not\subseteq\set(g(x_t*u_{j+1}))$, in which case $e(\sigma_t;g(x_t*u_{j+1}))=0$ by convention. Therefore, the display above should also be zero.
Indeed, let $s\in\sigma_t$. If $\sigma_{t,s}\not\subseteq\set(g(x_t*u_{j+1}))$, then by regularity $\sigma_{t,s}\not\subseteq\set(g(x_s*g(x_t*u)))$, therefore the corresponding summands in the display above are zero. If $\sigma_{t,s}\subseteq\set(g(x_t*u_{j+1}))$, then $s\not\in\set(g(x_t*u_{j+1}))$, and therefore by \Cref{lem:lem1.8} $g(x_s*g(x_t*u_{j+1}))=g(x_t*u_{j+1})$. To show that the corresponding summands in the display above cancel, it suffices to show that $C(x_{\sigma_{t,s}}*g(x_t*u_{j+1}),x_s)^{-1}$ is equal to
\[
C \left( x_{\sigma_{t,s}}, \frac{x_s*g(x_t*u_{j+1})}{g(x_s*g(x_t*u_{j+1}))}\right)^{-1} \cdot C\left( g(x_s*g(x_t*u_{j+1})), \frac{x_s*g(x_t*u_{j+1})}{g(x_s*g(x_t*u_{j+1}))} \right)^{-1}.
\]
Indeed, since $g(x_s*g(x_t*u_{j+1}))=g(x_t*u_{j+1})$, this reduces to 
\[
C(x_{\sigma_{t,s}}*g(x_t*u_{j+1}),x_s)^{-1}=C(x_{\sigma_{t,s}}, x_s)^{-1} \cdot C( g(x_t*u_{j+1}), x_s)^{-1},
\]
which is true since $C$ is a bicharacter.

The element $\partial^{\bbHT^{(j)}}\psi^{(j)}(e(\sigma;u_{j+1}))$ is equal to

\begin{equation*}
\begin{split}
  \sum_{t \in \sigma} \Bigg( &-\sum_{\substack{s\in \sigma_t\\ s<t}}
e(\sigma_{t,s};g(x_t*u_{j+1}))(-1)^{\alpha(\sigma_t;s)}C(x_{\sigma_{t,s}}*g(x_t*u_{j+1}),x_s)^{-1}x_s\\
&-\sum_{\substack{s\in \sigma_t\\ s>t}} e(\sigma_{t,s};g(x_t*u_{j+1}))(-1)^{\alpha(\sigma_t;s)}C(x_{\sigma_{t,s}}*g(x_t*u_{j+1}),x_s)^{-1}x_s\\
&+ \sum_{\substack{s\in \sigma_t\\ s<t}} e(\sigma_{t,s};g(x_s*g(x_t*u_{j+1})))
(-1)^{\alpha(\sigma_t;s)}
C \left( x_{\sigma_{t,s}}, \frac{x_s*g(x_t*u_{j+1})}{g(x_s*g(x_t*u_{j+1}))}\right)^{-1} \\
 &\qquad \quad \cdot C\left( g(x_s*g(x_t*u_{j+1})), \frac{x_s*g(x_t*u_{j+1})}{g(x_s*g(x_t*u_{j+1}))} \right)^{-1}
\frac{x_s*g(x_t*u_{j+1})}{g(x_s*g(x_t*u_{j+1}))}\\ 
&+ \sum_{\substack{s\in \sigma_t\\ s>t}} e(\sigma_{t,s};g(x_s*g(x_t*u_{j+1})))
(-1)^{\alpha(\sigma_t;s)}
C \left( x_{\sigma_{t,s}}, \frac{x_s*g(x_t*u_{j+1})}{g(x_s*g(x_t*u_{j+1}))}\right)^{-1} \\
 &\qquad \quad \cdot C\left( g(x_s*g(x_t*u_{j+1})), \frac{x_s*g(x_t*u_{j+1})}{g(x_s*g(x_t*u_{j+1}))} \right)^{-1}
\frac{x_s*g(x_t*u_{j+1})}{g(x_s*g(x_t*u_{j+1}))}\Bigg)\\
&\quad \cdot(-1)^{\alpha(\sigma;t)}C(x_{\sigma_t},\kappa(x_t*u_{j+1}))^{-1} C(g(x_t*u_{j+1}), \kappa(x_t*u_{j+1}))^{-1} \kappa(x_t*u_{j+1}).
\end{split}
\end{equation*}
Distributing the external sum, rewriting $\alpha(\sigma_t;s)$ and reordering the variables of the monomials yields

\begin{equation*}
\begin{split}
-\sum_{t \in \sigma}\sum_{\substack{s\in \sigma_t\\ s<t}}
&e(\sigma_{t,s};g(x_t*u_{j+1}))(-1)^{\alpha(\sigma;t)+\alpha(\sigma;s)}C(x_{\sigma_{t,s}}*g(x_t*u_{j+1}),x_s)^{-1}\\
&\cdot C(x_{\sigma_t},\kappa(x_t*u_{j+1}))^{-1} C(g(x_t*u_{j+1}), \kappa(x_t*u_{j+1}))^{-1} C(x_s, \kappa(x_t*u_{j+1})) x_s*\kappa(x_t*u_{j+1})\\
+\sum_{s \in \sigma}\sum_{\substack{t\in \sigma_s\\ s<t}} &e(\sigma_{s,t};g(x_s*u_{j+1}))(-1)^{\alpha(\sigma;t)+\alpha(\sigma;s)}C(x_{\sigma_{s,t}}*g(x_s*u_{j+1}),x_t)^{-1}\\
&\cdot C(x_{\sigma_s},\kappa(x_s*u_{j+1}))^{-1} C(g(x_s*u_{j+1}), \kappa(x_s*u_{j+1}))^{-1} C(x_t, \kappa(x_s*u_{j+1})) x_t*\kappa(x_s*u_{j+1})\\
+ \sum_{t \in \sigma}\sum_{\substack{s\in \sigma_t\\ s<t}} &e(\sigma_{t,s};g(x_s*g(x_t*u_{j+1}))) (-1)^{\alpha(\sigma;t)+\alpha(\sigma;s)}
C \left( x_{\sigma_{t,s}}, \frac{x_s*g(x_t*u_{j+1})}{g(x_s*g(x_t*u_{j+1}))}\right)^{-1} \\
 &\cdot C\left( g(x_s*g(x_t*u_{j+1})), \frac{x_s*g(x_t*u_{j+1})}{g(x_s*g(x_t*u_{j+1}))} \right)^{-1}\\
&\cdot C(x_{\sigma_t},\kappa(x_t*u_{j+1}))^{-1} 
C(g(x_t*u_{j+1}), \kappa(x_t*u_{j+1}))^{-1}\\ 
&\cdot C\left(\frac{x_s*g(x_t*u_{j+1})}{g(x_s*g(x_t*u_{j+1}))}, \kappa(x_t*u_{j+1})\right)
\frac{x_s*g(x_t*u_{j+1})}{g(x_s*g(x_t*u_{j+1}))}*\kappa(x_t*u_{j+1})\\
- \sum_{s \in \sigma}\sum_{\substack{t\in \sigma_s\\ s<t}} &e(\sigma_{s,t};g(x_t*g(x_s*u_{j+1})))(-1)^{\alpha(\sigma;t)+\alpha(\sigma;s)}
C \left( x_{\sigma_{s,t}}, \frac{x_t*g(x_s*u_{j+1})}{g(x_t*g(x_s*u_{j+1}))}\right)^{-1} \\
&\cdot C\left( g(x_t*g(x_s*u_{j+1})), \frac{x_t*g(x_s*u_{j+1})}{g(x_t*g(x_s*u_{j+1}))} \right)^{-1}\\
&\cdot C(x_{\sigma_s},\kappa(x_s*u_{j+1}))^{-1} 
C(g(x_s*u_{j+1}), \kappa(x_s*u_{j+1}))^{-1}\\ 
&\cdot C\left(\frac{x_t*g(x_s*u_{j+1})}{g(x_t*g(x_s*u_{j+1}))}, \kappa(x_s*u_{j+1})\right)
\frac{x_t*g(x_s*u_{j+1})}{g(x_t*g(x_s*u_{j+1}))}*\kappa(x_s*u_{j+1}).
\end{split}
\end{equation*}

To finish proving that $\psi^{(j)}\partial^{\tbbT}(e(\sigma;u_{j+1}))=\partial^{\bbHT^{(j)}}\psi^{(j)}(e(\sigma;u_{j+1}))$, we will show that the first double sum in the display above coincides with the second double sum in the expansion of $\psi^{(j)}\partial^{\tbbT}(e(\sigma;u_{j+1}))$, the second double sum in the display above coincides with the first double sum in the expansion of $\psi^{(j)}\partial^{\tbbT}(e(\sigma;u_{j+1}))$, and then we will show that the third and fourth double sums in the display above cancel.

 We first simplify the product of the $C$'s in the first double summation above

\begin{align*}
&C(x_{\sigma_{t,s}}*g(x_t*u_{j+1}),x_s)^{-1}C(x_{\sigma_t},\kappa(x_t*u_{j+1}))^{-1} C(g(x_t*u_{j+1}), \kappa(x_t*u_{j+1}))^{-1} C(x_s, \kappa(x_t*u_{j+1}))\\
=&C(x_{\sigma_{t,s}}*g(x_t*u_{j+1}),x_s)^{-1}C\left(\frac{x_s}{x_{\sigma_t}*g(x_t*u_{j+1})}, \kappa(x_t*u_{j+1})\right)\\
=&C(x_{\sigma_{t,s}}*g(x_t*u_{j+1}),x_s)^{-1}C(x_{\sigma_{t,s}}*g(x_t*u_{j+1}), \kappa(x_t*u_{j+1}))^{-1}\\
=&C\left( x_{\sigma_{t,s}} * g(x_t*u_{j+1}),x_s * \frac{x_t * u_{j+1}}{g(x_t*u_{j+1})} \right)^{-1}.
\end{align*}

We simplify the product of the $C$'s in the second double summation in the expansion of $\psi^{(j)}\partial^{\tbbT}(e(\sigma;u_{j+1}))$

\begin{align*}
&C(x_{\sigma_{s,t}},\kappa(x_t*u_{j+1}))^{-1} C(g(x_t*u_{j+1}),\kappa(x_t*u_{j+1}))^{-1} 
    C(x_{\sigma_s}*u_{j+1}, x_s)^{-1}C(\kappa(x_t*u_{j+1}),x_s)\\
    =&C(x_{\sigma_{s,t}}*g(x_t*u_{j+1}),\kappa(x_t*u_{j+1}))^{-1}C(x_{\sigma_s}*u_{j+1}, x_s)^{-1}C(\kappa(x_t*u_{j+1}),x_s)\\
    =&C(x_{\sigma_{s,t}}*g(x_t*u_{j+1}),\kappa(x_t*u_{j+1}))^{-1}C \left( \frac{1}{x_{\sigma_s}*u_{j+1}}, x_s \right)
    C\left( \frac{x_t * u_{j+1}}{g(x_t*u_{j+1})},x_s \right)\\
    =&C(x_{\sigma_{s,t}}*g(x_t*u_{j+1}),\kappa(x_t*u_{j+1}))^{-1}C(x_{\sigma_{s,t}}*g(x_t*u_{j+1}), x_s)^{-1}\\
    =&C\left( x_{\sigma_{t,s}} * g(x_t*u_{j+1}),x_s * \frac{x_t * u_{j+1}}{g(x_t*u_{j+1})} \right)^{-1}.
\end{align*}

This is enough to show that the first double sum in the expansion of $\partial^{\bbHT^{(j)}}\psi^{(j)}(e(\sigma;u_{j+1}))$ coincides with the second double sum in the expansion of $\psi^{(j)}\partial^{\tbbT}(e(\sigma;u_{j+1}))$. A similar argument shows that the second double sum in the expansion of $\partial^{\bbHT^{(j)}}\psi^{(j)}(e(\sigma;u_{j+1}))$ coincides with the first double sum in the expansion of $\psi^{(j)}\partial^{\tbbT}(e(\sigma;u_{j+1}))$.

Now we show that the third and fourth double sums in the expansion of $\partial^{\bbHT^{(j)}}\psi^{(j)}(e(\sigma;u_{j+1}))$ cancel. Since
\[
\frac{x_t*g(x_s*u_{j+1})}{g(x_t*g(x_s*u_{j+1}))}*\kappa(x_s*u_{j+1})=\frac{x_s*x_t*u_{j+1}}{g(x_t*g(x_s*u_{j+1}))},
\]
which is symmetric in $s$ and $t$, it suffices to show that the product of the $C$'s is the same in both double summations. We first simplify the product of the $C$'s in the third double summation
\begin{align*}
&C \left( x_{\sigma_{t,s}}, \frac{x_s*g(x_t*u_{j+1})}{g(x_s*g(x_t*u_{j+1}))}\right)^{-1} 
  C\left( g(x_s*g(x_t*u_{j+1})), \frac{x_s*g(x_t*u_{j+1})}{g(x_s*g(x_t*u_{j+1}))} \right)^{-1}\\
  &\cdot
C(x_{\sigma_t},\kappa(x_t*u_{j+1}))^{-1} 
C(g(x_t*u_{j+1}), \kappa(x_t*u_{j+1}))^{-1}
 C\left(\frac{x_s*g(x_t*u_{j+1})}{g(x_s*g(x_t*u_{j+1}))}, \kappa(x_t*u_{j+1})\right)\\
=&C\left(x_{\sigma_{t,s}}* g(x_s*g(x_t*u_{j+1})), \frac{x_s*g(x_t*u_{j+1})}{g(x_s*g(x_t*u_{j+1}))} \right)^{-1}\\
&\cdot C\left(\frac{1}{x_{\sigma_t}*g(x_t*u_{j+1})} *\frac{x_s*g(x_t*u_{j+1})}{g(x_s*g(x_t*u_{j+1}))}, \kappa(x_t*u_{j+1})\right)\\
=&C\left(x_{\sigma_{t,s}}* g(x_s*g(x_t*u_{j+1})), \frac{x_s*g(x_t*u_{j+1})}{g(x_s*g(x_t*u_{j+1}))} \right)^{-1}C\left(x_{\sigma_{t,s}}* g(x_s*g(x_t*u_{j+1})), \kappa(x_t*u_{j+1})\right)^{-1}\\
=&C\left(x_{\sigma_{t,s}}* g(x_s*g(x_t*u_{j+1})), \frac{x_s*x_t*u_{j+1}}{g(x_s*g(x_t*u_{j+1}))}\right)^{-1}.
\end{align*}

To conclude the proof it suffices to show that the product of the $C$'s in the fourth double summation simplifies to the same scalar. Indeed,

\begin{align*}
&C \left( x_{\sigma_{s,t}}, \frac{x_t*g(x_s*u_{j+1})}{g(x_t*g(x_s*u_{j+1}))}\right)^{-1}
C\left( g(x_t*g(x_s*u_{j+1})), \frac{x_t*g(x_s*u_{j+1})}{g(x_t*g(x_s*u_{j+1}))} \right)^{-1}\\
&\cdot C(x_{\sigma_s},\kappa(x_s*u_{j+1}))^{-1} 
C(g(x_s*u_{j+1}), \kappa(x_s*u_{j+1}))^{-1}
 C\left(\frac{x_t*g(x_s*u_{j+1})}{g(x_t*g(x_s*u_{j+1}))}, \kappa(x_s*u_{j+1})\right)\\
=&C\left( x_{\sigma_{s,t}}*g(x_t*g(x_s*u_{j+1})), \frac{x_t*g(x_s*u_{j+1})}{g(x_t*g(x_s*u_{j+1}))} \right)^{-1} \\
&\cdot C\left(\frac{1}{x_{\sigma_s}*g(x_s*u_{j+1})}*\frac{x_t*g(x_s*u_{j+1})}{g(x_t*g(x_s*u_{j+1}))}, \kappa(x_s*u_{j+1})\right)\\
=&C\left( x_{\sigma_{s,t}}*g(x_t*g(x_s*u_{j+1})), \frac{x_t*g(x_s*u_{j+1})}{g(x_t*g(x_s*u_{j+1}))} \right)^{-1}C\left(x_{\sigma_{s,t}}*g(x_t*g(x_s*u_{j+1})), \kappa(x_s*u_{j+1})\right)^{-1}\\
=&C\left(x_{\sigma_{s,t}}* g(x_t*g(x_s*u_{j+1})), \frac{x_t*x_s*u_{j+1}}{g(x_t*g(x_s*u_{j+1}))}\right)^{-1},
\end{align*}
which is the same as the scalar in the third double summation by \Cref{lem:swap}.
\end{proof}

\begin{definition}
If $I$ is an ideal with linear quotients and regular decomposition function, then the \emph{skew Herzog-Takayama resolution} of $R/I$ is $\bbHT_\b$.
\end{definition}

\begin{remark}
It follows from (a skew version of) \cite[Lemma 1.1]{EK}, and the Remark after it, that $I$ is stable if and only if $\max g(m)\leq\min\kappa(m)$ for all $m\in M(I)$, where $\max g(m)$ is the largest index of a variable dividing $g(m)$ and $\min\kappa(m)$ the smallest index of a variable dividing $\kappa(m)$. Therefore, if $I$ is stable, then $C(g(m),\kappa(m))=1$ for all $m\in M(I)$, and the resolution $\bbHT_\b$ coincides with the resolution constructed in \cite[Theorem 3.4]{EKskew}. 
\end{remark}

\begin{definition}
A squarefree monomial ideal $I$ in $R$ is \emph{squarefree stable} if for every minimal generator $u$ of $I$ and all $i<\max u$ with $i\not\in\supp u$, one has that $\frac{x_i*u}{x_{\max u}}\in I$.
\end{definition}

\begin{remark}
In the commutative case squarefree stable ideals were studied by Aramova, Herzog and Hibi in \cite{sqrfreeStable}, who constructed a minimal free resolution. If one orders the minimal generators of a squarefree stable ideal $I$ with respect to the reverse degree lexicographic order, then one can see that $I$ has linear quotients with $\set u=\{i\mid i<\max u,i\not\in\supp u\}$ for every minimal generator $u$. It can be verified that the decomposition function is regular, therefore the Herzog-Takayama resolution can be used to resolve these ideals, and in fact it reduces to the Aramova-Herzog-Hibi resolution. Similarly, the skew Herzog-Takayama resolution can be used to minimally resolve squarefree stable ideals in skew polynomial rings.
\end{remark}

\begin{definition}
A squarefree ideal $I$ of a skew polynomial ring $R$ generated in a single degree is called \emph{matroidal} if for every pair of minimal generators $\mathbf{x}^\mathbf{a},\mathbf{x}^\mathbf{b}$, one has that if $a_i>b_i$ for some $i$, then there exists a $j$ with $a_j<b_j$ such that $\frac{x_j*\mathbf{x}^\mathbf{a}}{x_i}$ is another minimal generator.
\end{definition}

\begin{remark}
It was proved in \cite[Lemma 1.3 and Theorem 1.10]{cone} that matroidal ideals in commutative polynomial rings have linear quotients with respect to the reverse lexicographic order; moreover, their decomposition function is regular. The same argument works in the skew case, showing that the skew Herzog-Takayama resolution resolves matroidal ideals in skew polynomial rings.
\end{remark}

\begin{remark}
Explicit resolutions of matroidal ideals in commutative polynomial rings have been constructed, in different terms, by Reiner and Welker \cite{ReinWelk} and Novik, Postnikov and Sturmfels \cite{NovPosSturm}.
\end{remark}

In the next corollaries we will not be assuming that the ideal $I$ has a regular decomposition function, but we can still apply \Cref{thm:main} since the hypothesis on the decomposition function is only used to find an explicit formula for the differential. Without that hypothesis the resolution can still be constructed via an iterated mapping cone, but the differential is more mysterious. The subsequent corollaries are only concerned with Betti numbers, so we do not need to know the differential of the resolution. Let $I$ be an ideal with linear quotients, when working with the bigraded Poincar\'{e} series of $R/I$ as an $R$-module, which we will denote by $P_{R/I}^R(s,t)$ where $s$ is the variable for the homological degree and $t$ the variable for the $\mathbb{Z}$-degree, we ignore the $G$-grading.

\begin{corollary}
If $I$ is an ideal with linear quotients in a skew polynomial ring $R$, then the bigraded Poincar\'{e} series of $R/I$ is given by
\[
P_{R/I}^R(s,t)=1+\sum_{u\in G(I)}(1+s)^{|\set u|}st^{\deg u}.
\]
\end{corollary}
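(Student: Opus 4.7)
The plan is to read the bigraded Poincar\'{e} series directly off the iterated mapping cone construction that drives the proof of Theorem 3.4. Even in the absence of a regular decomposition function that construction still yields a free resolution of $R/I$ with $\bbHT_0 = R$ and, for $i \geq 1$, $\bbHT_i$ free on the symbols $e(\sigma; u)$ with $u \in G(I)$ and $\sigma \subseteq \set(u)$, $|\sigma| = i - 1$.

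The first step is to verify that this resolution is minimal. The twisted Taylor pieces $\tbbT_\b(x_{k_1}, \ldots, x_{k_l}; u_{j+1})$ are built from differentials whose entries are scalar multiples of the variables $x_{k_r}$ and therefore lie in the maximal ideal. Each lifting $\psi^{(j)}_\b$ of the connecting map $R/L_j \to R/I_j$, which is multiplication by $u_{j+1} \in \mathfrak{m}$, can be chosen to have entries in $\mathfrak{m}$; the mapping cone of such a lifting between minimal complexes is itself minimal. The concrete choice of $\psi^{(j)}_\b$ described in the proof of Theorem 3.4 satisfies this because its entries are scalar multiples of the monomials $u_{j+1}$ or $\kappa(x_t \ast u_{j+1})$, and the latter is never a unit. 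Indeed, $\kappa(x_t \ast u_{j+1}) = 1$ would force $x_t \ast u_{j+1}$ to be a minimal generator of $I$, yet it is a proper monomial multiple of $u_{j+1} \in G(I)$, contradicting the uniqueness of the minimal generating set noted in Remark 2.4.

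With minimality secured, the bigraded Betti numbers of $R/I$ are the graded ranks of the $\bbHT_i$. The module $\bbHT_0 = R$ contributes $1$ to the series, and, tracking the internal grading as in the statement, each basis element $e(\sigma; u)$ for $i \geq 1$ contributes $s^{|\sigma|+1} t^{\deg u}$. Summing first over $\sigma \subseteq \set(u)$ gives
\[
\sum_{\sigma \subseteq \set(u)} s^{|\sigma|+1} t^{\deg u} \;=\; s\, t^{\deg u} (1+s)^{|\set u|},
\]
and summing further over $u \in G(I)$ produces the claimed identity.

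The main obstacle is the minimality verification; once one observes that $\kappa(x_t \ast u)$ is always a non-unit, the remainder of the argument is a routine rank count.
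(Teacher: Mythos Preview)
Your approach matches the paper's: the corollary carries no proof there either, the justification being the remark immediately preceding it that the iterated mapping cone from the proof of Theorem~\ref{thm:main} still furnishes a free resolution with the same free modules even without a regular decomposition function, after which one just reads off the ranks. You go further by explicitly addressing minimality, which the paper leaves implicit.

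There is, however, a gap in your minimality argument. To show that each $\psi^{(j)}$ has entries in $\mathfrak{m}$ you invoke the explicit formula for $\psi^{(j)}$ from the proof of Theorem~\ref{thm:main} and observe that its entries are scalar multiples of $u_{j+1}$ or $\kappa(x_t*u_{j+1})$. But that explicit formula is only verified to be a chain map when the decomposition function is \emph{regular}: the check that $\psi^{(j)}\partial^{\tbbT}=\partial^{\bbHT^{(j)}}\psi^{(j)}$ in the paper relies on Lemma~\ref{lem:swap}, which in turn requires regularity. Since the corollary drops that hypothesis, you cannot appeal to this particular $\psi^{(j)}$ in general. Your preceding sentence, that a lifting of multiplication by $u_{j+1}\in\mathfrak{m}$ ``can be chosen'' with entries in $\mathfrak{m}$, is the right claim, but it does not follow merely from $u_{j+1}\in\mathfrak{m}$ (there are easy counterexamples already over $\kk[x]$). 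What rescues it here is the $G\times\mathbb{Z}$-grading: choosing $\psi^{(j)}$ homogeneous forces every entry to be a scalar multiple of a monomial, and one then argues that no such monomial can have multidegree zero. That argument, not the explicit regular-case formula, is what the general statement needs.
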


In the next corollary $\beta_i(I)$ will denote the $i$th Betti number of $I$
\begin{corollary}
Let $I$ be an ideal with linear quotients in a skew polynomial ring, then
\[
\beta_i(I)=\sum_{u\in G(I)}\binom{|\set u|}{i}.
\]
\end{corollary}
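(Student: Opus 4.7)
The plan is to derive this formula as an immediate consequence of the preceding Poincar\'e series corollary, together with the standard homological identity $\beta_i(I) = \beta_{i+1}(R/I)$. To obtain that identity, I would apply $-\otimes_R \kk$ to the short exact sequence $0 \to I \to R \to R/I \to 0$; since $\Tor_i^R(R,\kk) = 0$ for $i \geq 1$, the resulting long exact sequence of $\Tor$ yields isomorphisms $\Tor_i^R(I,\kk) \cong \Tor_{i+1}^R(R/I,\kk)$ for all $i \geq 1$, and the case $i=0$ follows from the fact that the induced map $R \otimes_R \kk \to R/I \otimes_R \kk$ is an isomorphism (so its kernel, which is the image of $I \otimes_R \kk$, maps isomorphically onto $\Tor_1^R(R/I,\kk)$).

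Next, I would read off $\beta_{i+1}(R/I)$ from the expression
\[
P^R_{R/I}(s,t) = 1 + \sum_{u \in G(I)} (1+s)^{|\set u|}\, s\, t^{\deg u}.
\]
Expanding $(1+s)^{|\set u|}\, s = \sum_{k=0}^{|\set u|} \binom{|\set u|}{k} s^{k+1}$, the coefficient of $s^{i+1}$ inside the summand indexed by $u$ is $\binom{|\set u|}{i}$. Setting $t=1$ (i.e.\ ignoring the $\mathbb{Z}$-grading and recording only the total Betti number in each homological degree) gives $\beta_{i+1}(R/I) = \sum_{u \in G(I)} \binom{|\set u|}{i}$, which equals $\beta_i(I)$ by the preceding paragraph.

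The only subtle point is that this corollary is stated without the assumption that the decomposition function is regular. As noted in the remark preceding the Poincar\'e series corollary, however, the iterated mapping cone construction used in the proof of \Cref{thm:main} still yields a minimal free resolution of $R/I$ whose ranks agree with those of $\bbHT_\b$; the ranks depend only on the structure of the iterated cone, not on an explicit formula for the differential. Thus the computation of Betti numbers proceeds identically to the regular case, and no real obstacle remains once the preceding Poincar\'e series corollary is in hand.
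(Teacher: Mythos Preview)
Your proposal is correct. The paper gives no explicit proof of this corollary, treating it as an immediate consequence of the construction of $\bbHT_\b$ (equivalently, of the preceding Poincar\'e series corollary), and your argument via the Poincar\'e series together with the shift $\beta_i(I)=\beta_{i+1}(R/I)$ is exactly the natural deduction.
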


In the next corollary $\mathrm{pd}_R$ denotes the projective dimension of an $R$-module.

\begin{corollary}
If $I$ is an ideal with linear quotients in a skew polynomial ring, then
\[
\mathrm{pd}_R\;R/I=1+\max_{u\in G(I)}|\set u|.
\]
\end{corollary}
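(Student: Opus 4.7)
The plan is to read the projective dimension directly off the Betti number formula in the preceding corollary. By definition $\mathrm{pd}_R\,R/I$ is the largest homological degree $i$ for which $\beta_i(R/I)\neq 0$, so the task reduces to pinpointing that index from the formula for $\beta_\bullet(I)$.

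First I would pass from Betti numbers of $I$ to those of $R/I$. Applying $-\otimes_R\kk$ to the short exact sequence $0\to I\to R\to R/I\to 0$ produces a long exact sequence of $\Tor$. Since $R$ is $R$-free, $\Tor_i^R(R,\kk)=0$ for $i\geq 1$; and since $I$ is contained in the irrelevant ideal $(x_1,\ldots,x_n)$, the induced map $I\otimes_R\kk\to R\otimes_R\kk$ is zero. A standard chase then gives $\beta_i(R/I)=\beta_{i-1}(I)$ for every $i\geq 1$. This argument transfers verbatim to the $G\times\mathbb{Z}$-graded skew setting, since only the existence of a free resolution over $R$ is required.

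Combining this identity with the preceding corollary yields
\[
\beta_i(R/I)=\sum_{u\in G(I)}\binom{|\set u|}{i-1}\qquad\text{for }i\geq 1.
\]
Each summand is a nonnegative integer that vanishes precisely when $i-1>|\set u|$, so $\beta_i(R/I)\neq 0$ if and only if there exists $u\in G(I)$ with $|\set u|\geq i-1$. The largest such $i$ is therefore $1+\max_{u\in G(I)}|\set u|$, which is the desired formula. The argument is a direct consequence of the preceding Betti number corollary; the only point requiring care is the reindexing between $\beta_\bullet(I)$ and $\beta_\bullet(R/I)$ in the skew graded setting, and since this is controlled by a long exact sequence of $\Tor$ that works identically in the noncommutative graded context, no substantial obstacle arises.
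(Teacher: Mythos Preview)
Your argument is correct. The paper states this corollary without proof, relying on the fact that the minimal free resolution $\bbHT_\bullet$ of $R/I$ has $\bbHT_i$ free on the symbols $e(\sigma;u)$ with $|\sigma|=i-1$ and $\sigma\subseteq\set(u)$, so its length is visibly $1+\max_{u\in G(I)}|\set u|$; your route through $\beta_i(I)$ followed by the $\Tor$-shift to $\beta_i(R/I)$ reaches the same conclusion and is essentially the same argument, just with one extra (harmless) reindexing step that could be bypassed by reading $\beta_i(R/I)$ directly from the Poincar\'e series corollary for $R/I$.
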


\begin{chunk}
Let $M$ be a $\mathbb{Z}$-graded $R$-module. The \emph{Tor-regularity} of $M$ is
\[
\mathrm{Tor.reg}_R\;M=\mathrm{sup}\{j-i\mid \beta_{i,j}(M)\neq0\},
\]
where $\beta_{i,j}(M)$ is the rank of the free module in the minimal free resolution of $M$ that has homological degree $i$ and $\mathbb{Z}$-degree $j$.

One can also define the \emph{Castelnuovo-Mumford regularity} of $M$, as in \cite[Definition 2.1]{Jo4}. We denote this invariant by $\mathrm{CM.reg}_R\;M$.

\end{chunk}

As for the commutative case, the following corollary follows directly from the construction of the skew Herzog-Takayama resolution of an ideal with linear quotients.

\begin{corollary}\label{cor:TorReg}
Let $I$ be an ideal with linear quotients in a skew polynomial ring $R$. Then the Tor-regularity of $I$ is given by
\[
\mathrm{Tor.reg}_R\;I=\max\{\mathrm{deg}(u)\mid u\in G(I)\}.
\]
\end{corollary}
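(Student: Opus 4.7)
The plan is to read off the Tor-regularity directly from the (minimal) skew Herzog-Takayama resolution of $R/I$. By \Cref{thm:main} together with the paragraph preceding this corollary, the iterated mapping cone construction produces a minimal free resolution of $R/I$ even when $I$ lacks a regular decomposition function: the regularity hypothesis affects only the explicit formula for the differential, not the bigraded shape of the resolution, which is all that the Tor-regularity depends on.

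First, I would shift homologically to obtain a minimal free resolution of $I$ itself: in homological degree $i \geq 0$ of this shifted resolution the generators are precisely the symbols $e(\sigma;u)$ with $u \in G(I)$ and $\sigma \subseteq \set(u)$ of cardinality $i$. Next, I would record the internal $\mathbb{Z}$-degree of each such symbol. Because the differentials of $\bbHT_\b$ are homogeneous, and because $x_t * u = g(x_t*u) * \kappa(x_t*u)$ forces $\deg(x_t) + \deg(u) = \deg(g(x_t*u)) + \deg(\kappa(x_t*u))$, a straightforward induction on $|\sigma|$ shows that $e(\sigma;u)$ sits in $\mathbb{Z}$-degree $\deg(u) + |\sigma|$ in the standard graded setting of the claim.

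Minimality of the resolution then ensures that $\beta_{i,j}(I) \neq 0$ if and only if there exist $u \in G(I)$ and $\sigma \subseteq \set(u)$ with $|\sigma| = i$ and $j = \deg(u) + i$, so that $j - i = \deg(u)$. Taking the supremum over all such pairs yields $\mathrm{Tor.reg}_R\,I = \max\{\deg(u) \mid u \in G(I)\}$, with the supremum attained by choosing $\sigma = \emptyset$ on a generator of maximum degree. The only conceivable obstacle is the degree bookkeeping through the mapping cone construction, but this is routine given the homogeneity already established in \Cref{thm:main}, so no substantive additional calculation is required.
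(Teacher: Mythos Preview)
Your proposal is correct and follows exactly the approach the paper intends: the paper's own ``proof'' is the single sentence that the corollary follows directly from the construction of the skew Herzog--Takayama resolution, and you have simply spelled out that reading-off in detail. Your observation that the regularity hypothesis on the decomposition function is irrelevant here (only the bigraded shape of the mapping-cone resolution matters) matches the remark in the paper preceding this block of corollaries, and your degree bookkeeping for $e(\sigma;u)$ is precisely the routine check the paper leaves implicit.
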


\begin{corollary}
Let $I$ be an ideal with linear quotients in a skew polynomial ring $R$. Then the Castelnuovo-Mumford regularity of $I$ is given by
\[
\mathrm{CM.reg}_R\;I=\max\{\mathrm{deg}(u)\mid u\in G(I)\}.
\]
\end{corollary}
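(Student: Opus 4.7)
The plan is to reduce the statement to \Cref{cor:TorReg} by invoking the coincidence of Castelnuovo--Mumford regularity and Tor-regularity for finitely generated graded modules over the skew polynomial ring $R$. In the commutative case this coincidence is classical (via local duality relating the minimal graded free resolution to local cohomology), and in the noncommutative graded setting an analogous equality is available in the reference \cite{Jo4} cited earlier for the definition of $\mathrm{CM.reg}_R$, where the Castelnuovo--Mumford regularity of a graded $R$-module is defined in terms of a noncommutative analogue of local cohomology at the augmentation ideal.

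Concretely, the first step is to verify (or to quote from \cite{Jo4}) that for every finitely generated $\mathbb{Z}$-graded $R$-module $M$ one has
\[
\mathrm{CM.reg}_R\;M = \mathrm{Tor.reg}_R\;M.
\]
The input needed for this equality is that the skew polynomial ring $R$ is AS-regular noetherian with a balanced dualizing complex and satisfies Artin--Zhang's $\chi$-condition; these properties of $R$ are standard and are independent of the particular choice of positive integer degrees $d_i=\deg x_i$. Once this equality is in hand, applying it with $M=I$ and combining with \Cref{cor:TorReg} yields
\[
\mathrm{CM.reg}_R\;I \;=\; \mathrm{Tor.reg}_R\;I \;=\; \max\{\deg(u)\mid u\in G(I)\},
\]
as claimed.

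The main obstacle is purely bibliographic: one needs to ensure that the formulation of the equality $\mathrm{CM.reg}_R = \mathrm{Tor.reg}_R$ given in \cite{Jo4} is stated in enough generality to cover ideals in a skew polynomial ring with an arbitrary positive-integer grading on the variables, rather than only the standard grading. Since the arguments in \cite{Jo4} are grading-agnostic and rely only on the good homological behavior of $R$, this should present no essential difficulty, and no further computation beyond \Cref{cor:TorReg} is required.
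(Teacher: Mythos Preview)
Your proposal is correct and follows essentially the same approach as the paper: both reduce immediately to \Cref{cor:TorReg} via the equality $\mathrm{CM.reg}_R = \mathrm{Tor.reg}_R$ over the skew polynomial ring $R$. The only difference is bibliographic---the paper cites \cite[Theorem~5.4]{Dong} and \cite[Corollary~4.14]{Yek} for this equality rather than \cite{Jo4}.
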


\begin{proof}
It follows from \cite[Theorem 5.4]{Dong} and \cite[Corollary 4.14]{Yek} that over a skew polynomial ring $R$, the Castelnuovo-Mumford regularity of $I$ and the Tor-regularity of $I$ coincide. Now, one concludes by invoking \Cref{cor:TorReg}.
\end{proof}

We conclude the paper with an example.

\begin{example}
Consider the ideal $I =(x_1x_2,x_1x_3,x_2x_3,x_2x_4)$. One can check that this ideal has linear quotients for the given order on the generators. Moreover, a simple calculation shows that
\[
\set(x_1x_2)=\emptyset,\quad \set(x_1x_3)=\{2\},\quad\set(x_2x_3)=\{1\},\quad \text{and}\quad\set(x_2x_4)=\{1,3\}.
\]
One can also easily check that the decomposition function is regular.
The skew Herzog-Takayama resolution of the ideal $I$ has the following structure
\[
\hspace{-0.25in}\begin{tikzpicture}[baseline=(current  bounding  box.center)]
 \matrix (m) [matrix of math nodes,row sep=3em,column sep=2.5em,minimum width=2em] {
0&Re(1,3;x_2x_4)&\begin{matrix}Re(2;x_1x_3)\oplus Re(1;x_2x_3)\\\oplus\\Re(1;x_2x_4)\oplus Re(3;x_2x_4)\end{matrix}&\begin{matrix}Re(\emptyset;x_1x_2)\oplus Re(\emptyset;x_1x_3)\\\oplus\\Re(\emptyset;x_2x_3)\oplus Re(\emptyset;x_2x_4)\end{matrix}&R\\};
\path[->] (m-1-1) edge (m-1-2);
\path[->] (m-1-2) edge  node[above]{$\partial_3$} (m-1-3);
\path[->] (m-1-3) edge  node[above]{$\partial_2$} (m-1-4);
\path[->] (m-1-4) edge  node[above]{$\partial_1$} (m-1-5);
\end{tikzpicture}
\]
where $\partial_1$ is the obvious map, while the differentials $\partial_2$ and  $\partial_3$ are defined as follows

\[   \partial_2 =
        \begin{pmatrix}
        x_3 & x_3 & x_4 & 0\\
        -q_{2,3}x_2 & 0 & 0 & 0\\
        0 & -q_{1,2}q_{1,3}x_1 & 0 & x_4\\
        0 & 0 & -q_{1,2}q_{1,4}x_1 & -q_{3,4}x_3
        \end{pmatrix},  
     \partial_3 = 
        \begin{pmatrix}
        0\\ 
        -x_4\\ 
        q_{3,4}x_3\\ 
        -q_{1,2}q_{1,3}q_{1,4}x_1
        \end{pmatrix}.
\]
We show the calculation of $\partial_3(e(1,3;x_2x_4))$, the columns of $\partial_2$ are computed similarly. The differential $\partial_3(e(1,3;x_2x_4))$ is equal to
 
\begin{align*}
       &-e(3;x_2x_4) (-1)^{\alpha(\sigma, 1)} C(x_2x_3x_4, x_1)^{-1}x_1 - e(1;x_2x_4) (-1)^{\alpha(\sigma, 3)} C(x_1x_2x_4, x_3)^{-1}x_3 \\
      & +e(3;g(x_1x_2x_4)) (-1)^{\alpha(\sigma, 1)} C\left( x_3,\frac{x_1x_2x_4}{g(x_1x_2x_4)}\right)^{-1} C\left(g(x_1x_2x_4), \frac{x_1x_2x_4}{g(x_1x_2x_4)}\right)^{-1} \frac{x_1x_2x_4}{g(x_1x_2x_4)} \\
      & + e(1;g(x_2x_3x_4)) (-1)^{\alpha(\sigma, 3)} C\left(x_1,\frac{x_2x_3x_4}{g(x_2x_3x_4)}\right)^{-1} C\left(g(x_2x_3x_4), \frac{x_2x_3x_4}{g(x_2x_3x_4)}\right)^{-1} \frac{x_2x_3x_4}{g(x_2x_3x_4)}.
\end{align*}

Since $g(x_1x_2x_4) = x_1x_2$ and $g(x_2x_3x_4) = x_2x_3$, the previous display simplifies to

\begin{align*}
     \partial_3(e(1,3;x_2x_4)) = &- e(3;x_2x_4)\overbrace{C(x_2x_3x_4, x_1)^{-1}}^{q_{1,2}q_{1,3}q_{1,4}} x_1 + e(1;x_2x_4) \overbrace{C(x_1x_2x_4, x_3)^{-1}}^{q_{3,4}}x_3\\
      & + e(3;x_1x_2) C(x_3,x_4)^{-1}C(x_1x_2, x_4)^{-1}x_4 - e(1;x_2x_3) C(x_1,x_4)^{-1} C(x_2x_3, x_4)^{-1}x_4.
\end{align*}

Note that 
\[
C(x_3,x_4)=C(x_1x_2,x_4)=C(x_1,x_4)=C(x_2x_3,x_4)=1.
\]

Therefore
\[
\partial_3(e(1,3;x_2x_4))=-q_{1,2}q_{1,3}q_{1,4}e(3;x_2x_4)x_1+q_{3,4}e(1;x_2x_4)x_3+e(3;x_1x_2)x_4-e(1;x_2x_3)x_4.
\]

Since $3\not\in\set(x_1x_2)$, it follows by our convention that $e(3;x_1x_2)=0$, finally yielding
\[
\partial_3(e(1,3;x_2x_4))=-q_{1,2}q_{1,3}q_{1,4}e(3;x_2x_4)x_1+q_{3,4}e(1;x_2x_4)x_3-e(1;x_2x_3)x_4.
\]

\end{example}

\bibliographystyle{amsplain}
\bibliography{biblio}
\end{document}